\numberwithin{equation}{section}
\numberwithin{figure}{section}
\theoremstyle{plain}
\newtheorem{thm}{\protect\theoremname}
\theoremstyle{remark}
\newtheorem{rem}[thm]{\protect\remarkname}
\theoremstyle{plain}
\newtheorem{lem}[thm]{\protect\lemmaname}
\theoremstyle{plain}
\newtheorem{cor}[thm]{\protect\corollaryname}
\providecommand{\corollaryname}{Corollary}
\providecommand{\lemmaname}{Lemma}
\providecommand{\remarkname}{Remark}
\providecommand{\theoremname}{Theorem}
\begin{document}
\title[Exactness of Lepage 2-forms and globally variational equations]{Exactness of Lepage 2-forms and globally variational differential
equations}
\author{Zbyn\v{e}k Urban \and  Jana Voln\'a}
\address{Z. Urban\\
Department of Mathematics, Faculty of Civil Engineering, V\v{S}B-Technical
University of Ostrava, Ludv\'ika Pod\'e\v{s}t\v{e} 1875/17, 708
33 Ostrava, Czech Republic}
\email{zbynek.urban@vsb.cz}
\address{J. Voln\'a\\
Department of Mathematics, Faculty of Civil Engineering, V\v{S}B-Technical
University of Ostrava, Ludv\'ika Pod\'e\v{s}t\v{e} 1875/17, 708
33 Ostrava, Czech Republic}
\email{jana.volna@vsb.cz}
\begin{abstract}
The exactness equation for Lepage $2$-forms, associated with variational
systems of ordinary differential equations on smooth manifolds, is
analyzed with the aim to construct a~concrete global variational
principle. It is shown that locally variational systems defined by
homogeneous functions of degree $c\neq0,1$ are automatically globally
variational. A~new constructive method of finding a~global Lagrangian
is described for these systems, which include for instance the geodesic
equations in Riemann and Finsler geometry.
\end{abstract}

\keywords{Variational differential equation; Lagrangian; Euler-Lagrange expressions;
Helmholtz conditions; Lepage form; homogeneous function.}
\thanks{ZU appreciates support of the Visegrad grant No. 51810810 at the University
of Pre\v{s}ov.}
\subjclass[2010]{58A15; 58E30; 34A26; 53C22.}
\maketitle

\section{Introduction}

Our aim is to study a~\emph{construction of a~global Lagrangian}
for globally variational equations on fibered tangent bundles of smooth
$m$-dimensional manifolds. In our recent work \cite{UrbanVolna-2manifolds},
we gave a~solution to this problem for $m=2$ by means of applying
the de Rham top-cohomology theory; in part we now generalize the methods
of \cite{UrbanVolna-2manifolds} to dimension $m\geq2$. In general,
however, the problem remains open since we proceeded in this paper
for a~class of \emph{homogeneous} differential equations of degree
$c\neq0,1$ only. The topic belongs to studies of the influence of
topology on variationality of differential equations, and on the existence
and a~construction of the corresponding local and global variational
principles; see Krupka, Urban and Voln\'a \cite{KUV}.

The existence of a~global variational principle for (ordinary or
partial) differential equations is influenced by the topology of the
underlying space. For \emph{ordinary} variational equations of arbitrary
order it depends on the \textit{second de Rham} cohomology group $H_{\mathrm{dR}}^{2}Y$
of the underlying fibered manifold $Y$: if $H_{\mathrm{dR}}^{2}Y$
is \emph{trivial}, then a~locally variational source form $\varepsilon$
on the $r$-th jet prolongation $J^{r}Y$ is also globally variational.
This important result is due to Takens \cite{Takens} (see also Krupka
\cite{Krupka-VarSeqMech}, and for further comments Krupka \emph{et
al.} \cite{KruMorUrbVol}), is, however, sheaf-theoretic and to the
authors' knowledge there is no general method how to construct a~global
Lagrangian for locally variational equations. Simple examples also
show that the well-known Vainberg-Tonti formula (cf. Tonti {[}19{]})
fails to produce global Lagrangians.

The main idea is based on solvability of the \emph{global} \emph{exactness}
equation for the~\emph{Lepage equivalent }$\alpha_{\varepsilon}$
of a~variational source form $\varepsilon$, associated with a~given
system of variational second-order ordinary differential equations
(cf. Krupka \cite{Krupka-Book}, Krupkov\'a and Prince \cite{Krupkova-Prince}).
Globally defined $2$-form $\alpha_{\varepsilon}$ on $\mathbb{R}\times TM$
represents an example of a~\emph{Lepage $2$-form} in Lagrangian
mechanics (see Krupkov\'{a} \cite{Krupkova1986}), satisfying the
equation $\alpha_{\varepsilon}=d\Theta_{\lambda}$, where $\Theta_{\lambda}$
is the well-known \emph{Cartan form}, which depends on the choice
of a~Lagrangian $\lambda$ whereas $d\Theta_{\lambda}$ \emph{does
not}. As a~result, we reduce the global exactness of the Lepage equivalent
$\alpha_{\varepsilon}$ of $\varepsilon$ to global exactness of a~certain
$2$-form globally defined on $M$ (cf. Theorems \ref{thm:K-local}
and \ref{thm:Global}). Apparently, the topology of $M$ decides on
global exactness of $\alpha_{\varepsilon}$. The meaning of Lepage
forms for the calculus of variations and their basic properties have
been reviewed by Krupka, Krupková and Saunders \cite{KKS}.

In the class of second-order ordinary differential equations, given
by \emph{variational} and \emph{homogeneous of degree} $c\neq0,1$
functions, we describe a~new \emph{construction} of a~global variational
principle, which does \emph{not} depend on the topology of the underlying
manifold. We prove that locally variational source forms with homogeneous
coefficients of degree $c\neq0,1$ are automatically globally variational,
and possess a~global Lagrangian given by Theorem \ref{thm:Main}.
Note that systems of second-order positive-homogenous (of degree 1)
differential equations, characterized by the well-known Zermelo conditions,
were also studied from the variational point of view by Urban and
Krupka \cite{UK-Zermelo,UK-HomogeneousEq}.

For notation and basic geometric structures well adapted to this work,
we refer to Krupka, Urban, and Voln\'a \cite{KUV}. Throughout, we
consider fibered manifolds which are the Cartesian products $Y=\mathbb{R}\times M$
over the \emph{real line} $\mathbb{R}$ and projection $\pi:\mathbb{R}\times M\rightarrow\mathbb{R}$,
where $M$ is a~general smooth manifold of dimension $\dim M=m\geq2$.
Clearly, the jet spaces $J^{1}Y$ and $J^{2}Y$ can be canonically
identified with products $\mathbb{R}\times TM$ and $\mathbb{R}\times T^{2}M$,
respectively, where $TM$ is the tangent bundle of $M$, and $T^{2}M$
denotes the manifold of \emph{second-order velocities} over $M$.
The canonical jet projections are denoted by $\pi^{2}:\mathbb{R}\times T^{2}M\rightarrow\mathbb{R}$,
$\pi^{2,0}:\mathbb{R}\times T^{2}M\rightarrow\mathbb{R}\times M$,
and $\pi^{2,1}:\mathbb{R}\times T^{2}M\rightarrow\mathbb{R}\times TM$.
Recall that elements of $T^{2}M$ are $2$-jets $J_{0}^{2}\zeta\in J^{2}\left(\mathbb{R},M\right)$
with origin $0\in\mathbb{R}$ and target $\zeta(0)\in M$. These jet
spaces are endowed with the natural fibered manifold structure: if
$\left(V,\psi\right)$, $\psi=\left(x^{i}\right)$, is a~chart on
$M$, then $\left(\mathbb{R}\times V,\mathrm{id}_{\mathbb{R}}\times\psi\right)$,
$\mathrm{id}_{\mathbb{R}}\times\psi=\left(t,x^{i}\right)$, is a~fibered
chart on $\mathbb{R}\times M$, and the associated charts on $\mathbb{R}\times TM$
and $\mathbb{R}\times T^{2}M$ reads $\left(\mathbb{R}\times V^{1},\mathrm{id}_{\mathbb{R}}\times\psi^{1}\right)$,
$\mathrm{id}_{\mathbb{R}}\times\psi^{1}=\left(t,x^{i},\dot{x}^{i}\right)$,
and $\left(\mathbb{R}\times V^{2},\mathrm{id}_{\mathbb{R}}\times\psi^{2}\right)$,
$\mathrm{id}_{\mathbb{R}}\times\psi^{2}=\left(t,x^{i},\dot{x}^{i},\ddot{x}^{i}\right)$,
respectively. Here $V^{1}$ and $V^{2}$ are preimages of $V$ in
the canonical tangent bundles projections $TM\rightarrow M$ and $T^{2}M\rightarrow M$.

The exterior algebra of differential forms on $\mathbb{R}\times TM$,
resp. $\mathbb{R}\times T^{2}M$, is denoted by $\Omega^{1}\left(\mathbb{R}\times M\right)$,
resp. $\Omega^{2}\left(\mathbb{R}\times M\right)$. By means of charts,
we put $hdt=dt$, $hdx^{i}=\dot{x}^{i}dt$, $hd\dot{x}^{i}=\ddot{x}^{i}dt$,
and for any function $f:\mathbb{R}\times TM\rightarrow\mathbb{R}$,
$hf=f\circ\pi^{2,1}$. These formulas define a~global homomorphism
of exterior algebras $h:\Omega^{1}\left(\mathbb{R}\times M\right)\rightarrow\Omega^{2}\left(\mathbb{R}\times M\right)$,
called the $\pi$-\emph{horizontalization}. A~$1$-form $\rho\in\Omega^{1}\left(\mathbb{R}\times M\right)$
is called \emph{contact}, if $h\rho=0$. With respect to a~chart
$\left(V,\psi\right)$, $\psi=\left(x^{i}\right)$, on $M$, every
contact $1$-form $\rho$ has an expression $\rho=A_{i}\omega^{i}$,
for some functions $A_{i}:\mathbb{R}\times V^{1}\rightarrow\mathbb{R},$
where $\omega^{i}=dx^{i}-\dot{x}^{i}dt$. For any differential $1$-form
$\rho\in\Omega_{1}^{1}\left(\mathbb{R}\times M\right)$, the pull-back
$\left(\pi^{2,1}\right)^{*}\rho$ has a~unique decomposition $\left(\pi^{2,1}\right)^{*}\rho=h\rho+p\rho$,
where $h\rho$, resp. $p\rho$, is $\pi^{2}$-\emph{horizontal} (respectively,
\emph{contact}) $1$-form on $\mathbb{R}\times T^{2}M$. This decomposition
can be directly generalized to arbitrary $k$-forms. For $k=2$, if
$\rho\in\Omega_{2}^{1}\left(\mathbb{R}\times M\right)$ is a~$2$-form
on $\mathbb{R}\times TM$, then we get $\left(\pi^{2,1}\right)^{*}\rho=p_{1}\rho+p_{2}\rho$,
where $p_{1}\rho$ (resp. $p_{2}\rho$) is the $1$-\emph{contact}
(respectively, $2$-\emph{contact}) component of $\rho$, spanned
by $\omega^{i}\wedge dt$, (respectively, $\omega^{i}\wedge\omega^{j}$).
Analogously, we employ these concepts on $\mathbb{R}\times T^{2}M$.

The results of this work can be generalized to higher-order variational
differential equations by means of similar methods. Another non-trivial
extension consists in replacing Cartesian product $\mathbb{R}\times M$
by a~general fibered manifold over $1$-dimensional base, and extension
to partial differential equations.

\section{Ordinary variational equations and Lepage 2-forms}

In coherence with the general theory of ordinary variational differential
equations on fibered spaces (cf. Krupkov\'a and Prince \cite{Krupkova-Prince},
and references therein), we give basic definitions and concepts, adapted
to our underlying structures.

Let $\varepsilon$ be a source form on $\mathbb{R}\times T^{2}M$,
i.e. $\pi^{2,0}$-horizontal $1$-contact $2$-form, locally expressed
as
\begin{equation}
\varepsilon=\varepsilon_{i}\omega^{i}\wedge dt,\label{eq:Source}
\end{equation}
with respect to a~chart $\left(V,\psi\right)$, $\psi=\left(x^{i}\right)$,
on $M$. In \eqref{eq:Source}, we suppose the coefficients $\varepsilon_{i}$,
$1\leq i\leq m$, are differentiable functions on $V^{2}\subset T^{2}M$,
and $\omega^{i}=dx^{i}-\dot{x}^{i}dt$, are contact $1$-forms on
$\mathbb{R}\times V^{1}$. These assumptions mean that we restrict
ourselves to autonomous systems of second-order differential equations,
defined by functions 
\begin{equation}
\varepsilon_{i}\left(x^{j},\dot{x}^{j},\ddot{x}^{j}\right)=0,\label{eq:System}
\end{equation}
for unknown differentiable curves $\zeta$ in $M$, $I\ni t\rightarrow\zeta(t)=\left(x^{j}(\zeta(t))\right)$
on an open interval $I\subset\mathbb{R}$.

Source form $\varepsilon$ \eqref{eq:Source} (or system \eqref{eq:System})
is called \emph{locally variational}, if there exists a~real-valued
function $\mathscr{L}:\mathbb{R}\times V^{2}\rightarrow\mathbb{R}$
such that system \eqref{eq:System} coincide with the \emph{Euler-Lagrange
equations} associated with $\mathscr{L}$, that is, $\varepsilon_{i}=E_{i}\left(\mathscr{L}\right)$
are the \emph{Euler-Lagrange expressions} of $\mathscr{L}$,
\[
E_{i}\left(\mathscr{L}\right)=\frac{\partial\mathscr{L}}{\partial x^{i}}-\frac{d}{dt}\frac{\partial\mathscr{L}}{\partial\dot{x}^{i}}+\frac{d^{2}}{dt^{2}}\frac{\partial\mathscr{L}}{\partial\ddot{x}^{i}}.
\]
$\mathscr{L}$ is called a~(local) \emph{Lagrange function} for $\varepsilon$.
By a~\emph{Lagrangian} for fibered manifold $\mathbb{R}\times M$
over $\mathbb{R}$ we call a~$\pi^{2}$-horizontal $1$-form $\lambda$
on an open subset $W^{2}\subset\mathbb{R}\times T^{2}M$; in a fibered
chart we have $\lambda=\mathscr{L}\left(t,x^{i},\dot{x}^{i},\ddot{x}^{i}\right)dt$.
The mapping $\lambda\rightarrow E_{\lambda}$, assigning to a~Lagrangian
$\lambda$ the \emph{Euler-Lagrange form} $E_{\lambda}$, is the well-known
\emph{Euler-Lagrange mapping} in the calculus of variations; in a~fibered
chart we have
\begin{equation}
E_{\lambda}=E_{i}\left(\mathscr{L}\right)\omega^{i}\wedge dt.\label{eq:EL-form}
\end{equation}
We remark that a~Lagrangian represents a~class of $1$-forms, and
a~source form represents a~class of $2$-forms in the (quotient)
\emph{variational sequence} over $W$ (see Krupka \cite{Krupka-VarSeqMech},
and also Krupka \emph{et al.} \cite{KUV-Miskolc}), in which the Euler-Lagrange
mapping is one of its morphisms.

The coefficients of a~locally variational source form $\varepsilon$
\eqref{eq:Source} coincide with the Euler-Lagrange expressions of
a~Lagrange function with respect to every fibered chart. Note also
that such a~Lagrange function can always be reduced to an equivalent~first-order
Lagrange function $\mathscr{L}=\mathscr{L}\left(t,x^{i},\dot{x}^{i}\right)$
for $\varepsilon$. Local Lagrange functions for $\varepsilon$, defined
on chart neighborhoods in $\mathbb{R}\times TM$, need \emph{not}
define a~global Lagrange function for $\varepsilon$ on $\mathbb{R}\times TM$.
If there exists a~Lagrange function $\mathscr{L}$ for $\varepsilon$,
defined on $\mathbb{R}\times TM$, we call $\varepsilon$ \emph{globally
variational}.

The following theorem summarizes necessary and sufficient conditions
for local variationality of $\varepsilon$.
\begin{thm}
\label{thm:LocalVariational}Let $\varepsilon$ be a source form on
$\mathbb{R}\times T^{2}M$, locally expressed by \eqref{eq:Source}
with respect to a~chart $\left(V,\psi\right)$, $\psi=\left(x^{i}\right)$,
on $M$. The following conditions are equivalent:

\emph{(a)} $\varepsilon$ is locally variational.

\emph{(b)} Functions $\varepsilon_{i}$, $1\leq i\leq m$, satisfy
the following system identically, 
\begin{align}
 & \frac{\partial\varepsilon_{i}}{\partial\ddot{x}^{j}}-\frac{\partial\varepsilon_{j}}{\partial\ddot{x}^{i}}=0,\nonumber \\
 & \frac{\partial\varepsilon_{i}}{\partial\dot{x}^{j}}+\frac{\partial\varepsilon_{j}}{\partial\dot{x}^{i}}-\frac{d}{dt}\left(\frac{\partial\varepsilon_{i}}{\partial\ddot{x}^{j}}+\frac{\partial\varepsilon_{j}}{\partial\ddot{x}^{i}}\right)=0,\label{eq:Helmholtz}\\
 & \frac{\partial\varepsilon_{i}}{\partial x^{j}}-\frac{\partial\varepsilon_{j}}{\partial x^{i}}-\frac{1}{2}\frac{d}{dt}\left(\frac{\partial\varepsilon_{i}}{\partial\dot{x}^{j}}-\frac{\partial\varepsilon_{j}}{\partial\dot{x}^{i}}\right)=0.\nonumber 
\end{align}

\emph{(c)} Functions $\varepsilon_{i}$, $1\leq i\leq m$, are linear
in the second derivatives, i.e.
\begin{align}
 & \varepsilon_{i}=A_{i}+B_{ij}\ddot{x}^{j},\label{eq:Epsilon-AB}
\end{align}
and the functions $A_{i},B_{ij}$, $1\leq i,j\leq m$, depend on $x^{i},\dot{x}^{i}$
only, and satisfy the conditions,
\begin{align}
 & B_{ij}=B_{ji},\quad\frac{\partial B_{ik}}{\partial\dot{x}^{j}}=\frac{\partial B_{jk}}{\partial\dot{x}^{i}},\label{eq:HelmholtzAB-1}\\
 & \frac{\partial A_{i}}{\partial\dot{x}^{j}}+\frac{\partial A_{j}}{\partial\dot{x}^{i}}-2\frac{\partial B_{ij}}{\partial x^{k}}\dot{x}^{k}=0,\label{eq:HelmholtzAB-2}\\
 & \frac{\partial A_{i}}{\partial x^{j}}-\frac{\partial A_{j}}{\partial x^{i}}-\frac{1}{2}\frac{\partial}{\partial x^{k}}\left(\frac{\partial A_{i}}{\partial\dot{x}^{j}}-\frac{\partial A_{j}}{\partial\dot{x}^{i}}\right)\dot{x}^{k}=0.\label{eq:HelmholtzAB-3}
\end{align}

\emph{(d)} The function
\begin{equation}
\mathscr{L}=\mathscr{L}_{T}-\frac{d}{dt}\left(x^{i}\int_{0}^{1}C_{i}\left(sx^{k},s\dot{x}^{k}\right)ds\right),\label{eq:Tonti-1stOrder}
\end{equation}
where functions $C_{i}$ are given by $B_{ij}=\partial C_{i}/\partial\dot{x}^{j}=\partial C_{j}/\partial\dot{x}^{i}$,
and
\begin{equation}
\mathscr{L}_{T}=x^{i}\int_{0}^{1}\varepsilon_{i}\left(sx^{k},s\dot{x}^{k},s\ddot{x}^{k}\right)ds,\label{eq:Tonti}
\end{equation}
 is a Lagrange function for $\varepsilon$ defined on $V^{1}$.

\emph{(e)} To every point of $\mathbb{R}\times T^{2}M$ there is a
neighborhood $W$ and a $2$-contact $2$-form $F_{W}$ on $W$ such
that the form $\alpha_{W}=\varepsilon|_{W}+F_{W}$ is closed. 

\emph{(f)} There exists a~closed $2$-form $\alpha_{\varepsilon}$
on $\mathbb{R}\times TM$ such that $\varepsilon=p_{1}\alpha_{\varepsilon}$.
If $\alpha_{\varepsilon}$ exists, it is unique and it has a chart
expression given by
\begin{equation}
\alpha_{\varepsilon}=\varepsilon_{i}\omega^{i}\wedge dt+\frac{1}{4}\left(\frac{\partial\varepsilon_{i}}{\partial\dot{x}^{j}}-\frac{\partial\varepsilon_{j}}{\partial\dot{x}^{i}}\right)\omega^{i}\wedge\omega^{j}+\frac{\partial\varepsilon_{i}}{\partial\ddot{x}^{j}}\omega^{i}\wedge\dot{\omega}^{j}.\label{eq:ALFA-Lepage}
\end{equation}
\end{thm}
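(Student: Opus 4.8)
The plan is to prove the six conditions equivalent through the cycle (a)$\Rightarrow$(b)$\Rightarrow$(c)$\Rightarrow$(d)$\Rightarrow$(a), and then to attach the Lepage characterizations by (a)$\Rightarrow$(f)$\Rightarrow$(e)$\Rightarrow$(b). The computational tools are the classical Helmholtz conditions together with the Vainberg--Tonti homotopy and the Cartan form; the only genuinely new ingredient is the explicit, chart-independent description of the Lepage equivalent $\alpha_\varepsilon$ in \eqref{eq:ALFA-Lepage}.

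For (a)$\Rightarrow$(b) I would write $\varepsilon_i=E_i(\mathscr L)$ for a local first-order Lagrange function $\mathscr L$, expand the Euler--Lagrange operator, and differentiate the resulting expression with respect to $\ddot x^j$, $\dot x^j$ and $x^j$; the commutation of the mixed partials of $\mathscr L$ then yields the three identities \eqref{eq:Helmholtz} in turn. For (b)$\Rightarrow$(c) the key remark is that $\varepsilon_i$ is a function on $T^2M$, hence independent of $\dddot x^k$, so that the total derivative occurring in the second identity of \eqref{eq:Helmholtz} contributes a term $(\partial^2\varepsilon_i/\partial\ddot x^k\partial\ddot x^j+\partial^2\varepsilon_j/\partial\ddot x^k\partial\ddot x^i)\dddot x^k$ whose coefficient must vanish identically; together with the symmetry of $\partial\varepsilon_i/\partial\ddot x^j$ supplied by the first identity this forces $\partial^2\varepsilon_i/\partial\ddot x^j\partial\ddot x^k=0$, i.e.\ the affine form \eqref{eq:Epsilon-AB}. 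Substituting \eqref{eq:Epsilon-AB} back into \eqref{eq:Helmholtz} and again separating the $\dddot x^k$-coefficients turns the three identities into \eqref{eq:HelmholtzAB-1}--\eqref{eq:HelmholtzAB-3}; the second equation in \eqref{eq:HelmholtzAB-1} is precisely the integrability condition ensuring the existence of the potentials $C_i$ with $B_{ij}=\partial C_i/\partial\dot x^j=\partial C_j/\partial\dot x^i$ used in (d).

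For (c)$\Rightarrow$(d) I would invoke the Vainberg--Tonti identity $E_i(\mathscr L_T)=\varepsilon_i$, which holds precisely because the Helmholtz conditions \eqref{eq:Helmholtz} are satisfied; since the Euler--Lagrange expressions of a total derivative vanish, $\mathscr L$ and $\mathscr L_T$ have the same Euler--Lagrange expressions, whence $E_i(\mathscr L)=\varepsilon_i$. It then remains to verify that the correction term in \eqref{eq:Tonti-1stOrder} cancels the second-order dependence of $\mathscr L_T$: the $\ddot x$-part of $\mathscr L_T$ equals $x^i\ddot x^j\int_0^1 sB_{ij}(sx,s\dot x)\,ds$, while the $\ddot x$-part of $\tfrac{d}{dt}(x^i\int_0^1 C_i\,ds)$ equals $x^i\ddot x^k\int_0^1 s(\partial C_i/\partial\dot x^k)(sx,s\dot x)\,ds=x^i\ddot x^k\int_0^1 sB_{ik}\,ds$, and the two are equal. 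Hence $\mathscr L$ is first-order and a Lagrange function for $\varepsilon$ on $V^1$, and (d)$\Rightarrow$(a) is immediate from the definition of local variationality.

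For the Lepage characterizations, (a)$\Rightarrow$(f) starts from the Cartan form $\Theta=\mathscr L\,dt+(\partial\mathscr L/\partial\dot x^i)\omega^i$ of a local first-order Lagrangian and sets $\alpha_\varepsilon=d\Theta$; a direct expansion gives \eqref{eq:ALFA-Lepage} and shows $p_1\alpha_\varepsilon=E_\lambda=\varepsilon$, while $d\alpha_\varepsilon=d(d\Theta)=0$. The step I expect to be the main obstacle is to prove that the right-hand side of \eqref{eq:ALFA-Lepage}, written purely in terms of $\varepsilon_i$, is both $\pi^{2,1}$-projectable and chart-independent, so that the locally defined forms glue to a single $2$-form on $\mathbb R\times TM$. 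Projectability follows from \eqref{eq:Epsilon-AB}: the $\ddot x^j$ in $\varepsilon_i\,\omega^i\wedge dt$ is absorbed by the $-B_{ij}\ddot x^j\,\omega^i\wedge dt$ hidden in $B_{ij}\,\omega^i\wedge\dot\omega^j$, while the $\ddot x^k$-coefficient of the middle term vanishes by the second part of \eqref{eq:HelmholtzAB-1}; chart-independence requires checking that the non-tensorial transformation of $\omega^i\wedge\dot\omega^j$ and of $\partial\varepsilon_i/\partial\ddot x^j$ is compensated by that of the $\omega^i\wedge\omega^j$-term. Uniqueness then follows because a closed $2$-form $\beta$ with $p_1\beta=0$ is forced to vanish: its purely contact part has projectable coefficients and closedness makes them zero. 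Finally (f)$\Rightarrow$(e) is obtained locally by $F_W=p_2\alpha_\varepsilon|_W$, and (e)$\Rightarrow$(b) by expanding $d\alpha_W=0$ in a chart and identifying the vanishing of its several contact components with \eqref{eq:Helmholtz}, which closes the chain of equivalences.
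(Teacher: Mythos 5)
Your proposal is correct in substance, but there is nothing in the paper to compare it against: Theorem \ref{thm:LocalVariational} is stated as a summary of known results and the paper deliberately gives no proof, citing Krupkov\'a--Prince, Tonti and Krupka instead. Your cycle (a)$\Rightarrow$(b)$\Rightarrow$(c)$\Rightarrow$(d)$\Rightarrow$(a) with the attached chain (a)$\Rightarrow$(f)$\Rightarrow$(e)$\Rightarrow$(b) is a faithful reconstruction of those standard literature arguments: the $\dddot x^k$-coefficient extraction in (b)$\Rightarrow$(c) is exactly right (the second identity kills $\partial^2\varepsilon_i/\partial\ddot x^j\partial\ddot x^k$ once combined with the symmetry from the first, and the $\dddot x^k$-part of the third identity yields the second condition in \eqref{eq:HelmholtzAB-1}), the cancellation of the $\ddot x$-dependence of $\mathscr L_T$ by the total derivative of $x^i\int_0^1 C_i\,ds$ checks out, and your uniqueness argument for (f) --- a closed $2$-form with vanishing $1$-contact part reduces to $P_{ij}\,\omega^i\wedge\omega^j$ and closedness forces $P_{ij}=0$ --- is the correct mechanism.

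Two points need tightening. First, in (c)$\Rightarrow$(d) you invoke the Vainberg--Tonti identity $E_i(\mathscr L_T)=\varepsilon_i$ ``because the Helmholtz conditions \eqref{eq:Helmholtz} are satisfied,'' but at that stage of your cycle you only have \eqref{eq:HelmholtzAB-1}--\eqref{eq:HelmholtzAB-3} in hand; the Tonti identity needs the full system \eqref{eq:Helmholtz}, so you must first prove (c)$\Rightarrow$(b), which requires deriving the additional identity \eqref{eq:HelmholtzAB-Dependent} from \eqref{eq:HelmholtzAB-1}--\eqref{eq:HelmholtzAB-3} (by differentiating \eqref{eq:HelmholtzAB-3} with respect to $\dot x^k$ and using \eqref{eq:HelmholtzAB-2}); the paper's Remark explicitly flags this identity as dependent, and it is also used essentially in the proofs of Lemma \ref{lem:AlfaClosed} and Theorem \ref{thm:K-local}. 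Second, the step you single out as the main obstacle in (a)$\Rightarrow$(f) --- chart-independence of the right-hand side of \eqref{eq:ALFA-Lepage} --- needs no transformation-law computation at all: on a chart overlap both local forms $d\Theta_{\lambda}$ and $d\Theta_{\bar\lambda}$ are closed with the same $1$-contact part $\varepsilon$, so your own uniqueness lemma forces them to agree, and the local forms glue automatically. (A minor cosmetic point: in (a)$\Rightarrow$(b) you assume a first-order Lagrange function; since the definition allows second-order $\mathscr L$, you should either note the standard reduction to first order or run the differentiation for the full second-order Euler--Lagrange operator.) With these repairs your argument is complete and matches the standard proofs the paper points to.
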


The identities \eqref{eq:Helmholtz}, or equivalently \eqref{eq:HelmholtzAB-1}\textendash \eqref{eq:HelmholtzAB-3},
are called the \emph{Helmholtz conditions} of local variationality
(cf. Krupková and Prince \cite{Krupkova-Prince}, and references therein).
Formula \eqref{eq:Tonti} yields the \emph{Vainberg-Tonti} \emph{Lagrange
function} for a locally variational source form (see Tonti \cite{Tonti},
Krupka \cite{Krupka-Book}), which can always be reduced to first-order
Lagrange function \eqref{eq:Tonti-1stOrder}.
\begin{rem}
The Euler-Lagrange form $E_{\lambda}$ \eqref{eq:EL-form}, associated
with the Vainberg-Tonti Lagrangian $\lambda=\mathscr{L}dt$ \eqref{eq:Tonti},
coincides with source form $\varepsilon$, provided the Helmholtz
conditions \eqref{eq:Helmholtz} are satisfied. We also note that
Helmholtz conditions \eqref{eq:Helmholtz} yield the following identity

\begin{equation}
\frac{\partial B_{ik}}{\partial x^{j}}-\frac{\partial B_{jk}}{\partial x^{i}}-\frac{1}{2}\frac{\partial}{\partial\dot{x}^{k}}\left(\frac{\partial A_{i}}{\partial\dot{x}^{j}}-\frac{\partial A_{j}}{\partial\dot{x}^{i}}\right)=0,\label{eq:HelmholtzAB-Dependent}
\end{equation}
which is, however, \emph{dependent} on Helmholtz conditions on $A_{i}$,
$B_{ij}$, \eqref{eq:HelmholtzAB-1}\textendash \eqref{eq:HelmholtzAB-3}.
\end{rem}

A~$2$-form $\alpha$ on $\mathbb{R}\times TM$ is called a~\emph{Lepage
$2$-form}, if $\alpha$ is closed and admits a~decomposition $\left(\pi^{2,1}\right)^{*}\alpha=E+F$,
where $E$ is $\pi^{2,0}$-horizontal $1$-contact $2$-form on $\mathbb{R}\times T^{2}M$,
and $F$ is a~$2$-contact $2$-form on $\mathbb{R}\times T^{2}M$.
$2$-form $\alpha_{\varepsilon}$, described by Theorem \ref{thm:LocalVariational},
(f), \eqref{eq:ALFA-Lepage}, is a~Lepage $2$-form, called the \emph{Lepage
equivalent} of a~source form $\varepsilon$.

The notion of a~Lepage $2$-form in fibered mechanics is due to Krupkov\'a
\cite{Krupkova1986} (see also generalizations by Krupkov\'a and
Prince \cite{Krupkova-Prince-paper,Krupkova-Prince}), and it contributes
to the theory of Lepage forms introduced by Krupka \cite{Krupka-Book},
and references therein. For further application, we point out the
following result.
\begin{thm}
Every second-order Lagrangian $\lambda$ on $\mathbb{R}\times T^{2}M$
has a~unique Lepage equivalent $\Theta_{\lambda}$ on $\mathbb{R}\times T^{3}M$.
In a fibered chart on $\mathbb{R}\times M$, if $\lambda=\mathscr{L}dt$,
then $\Theta_{\lambda}$ has the expression
\begin{equation}
\Theta_{\lambda}=\mathscr{L}dt+\left(\frac{\partial\mathscr{L}}{\partial\dot{x}^{i}}-\frac{d}{dt}\frac{\partial\mathscr{L}}{\partial\ddot{x}^{i}}\right)\omega^{i}+\frac{\partial\mathscr{L}}{\partial\ddot{x}^{i}}\dot{\omega}^{i}.\label{eq:Cartan}
\end{equation}
Moreover, a~source form $\varepsilon$ on $\mathbb{R}\times T^{2}M$
is globally variational if and only if the equation
\[
\left(\pi^{2,1}\right)^{*}\alpha_{\varepsilon}=d\Theta_{\lambda}
\]
has a~global solution $\lambda$ on $\mathbb{R}\times T^{2}M$.
\end{thm}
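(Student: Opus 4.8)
The plan is to prove the three assertions in turn: the chart formula for $\Theta_\lambda$, its uniqueness together with globality, and finally the variational characterisation. For the formula I would work in a fixed fibered chart and start from the most general $1$-form on $\mathbb{R}\times T^3M$ whose horizontalisation is $\lambda$, namely $\Theta=\mathscr{L}\,dt+A_i\omega^i+B_i\dot\omega^i+C_i\ddot\omega^i$ with $A_i,B_i,C_i$ unknown functions. Using $d\omega^i=-\dot\omega^i\wedge dt$, $d\dot\omega^i=-\ddot\omega^i\wedge dt$, $d\ddot\omega^i=-\dddot\omega^i\wedge dt$ together with the contact decomposition of $d\mathscr{L}$, I would compute $p_1d\Theta$ and read off the coefficients of $\omega^i\wedge dt$, $\dot\omega^i\wedge dt$, $\ddot\omega^i\wedge dt$ and $\dddot\omega^i\wedge dt$. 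The Lepage condition that $p_1d\Theta$ be $\pi$-horizontal forces the last three to vanish; solving recursively gives $C_i=0$, then $B_i=\partial\mathscr{L}/\partial\ddot{x}^i$, then $A_i=\partial\mathscr{L}/\partial\dot{x}^i-(d/dt)(\partial\mathscr{L}/\partial\ddot{x}^i)$, which is exactly \eqref{eq:Cartan}; the total derivative in $A_i$ is what makes $\Theta_\lambda$ genuinely defined on $\mathbb{R}\times T^3M$. Since the solution is forced, $\Theta_\lambda$ is unique in each chart, and because the defining conditions $h\Theta_\lambda=\lambda$ and ``$p_1d\Theta_\lambda$ horizontal'' are chart-independent, the local expressions agree on overlaps and patch to a global $\Theta_\lambda$.

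A by-product of this computation, which I would record separately, is that the surviving coefficient of $\omega^i\wedge dt$ equals $\partial\mathscr{L}/\partial x^i-(d/dt)(\partial\mathscr{L}/\partial\dot{x}^i)+(d^2/dt^2)(\partial\mathscr{L}/\partial\ddot{x}^i)=E_i(\mathscr{L})$; that is, $p_1d\Theta_\lambda=E_\lambda$. This identity is the bridge to the second assertion. The direction $(\Leftarrow)$ is then immediate: applying $p_1$ to $(\pi^{2,1})^*\alpha_\varepsilon=d\Theta_\lambda$ and using $p_1(\pi^{2,1})^*\alpha_\varepsilon=\varepsilon$ from Theorem~\ref{thm:LocalVariational}(f) together with $p_1d\Theta_\lambda=E_\lambda$ yields $\varepsilon=E_\lambda$ globally, so $\mathscr{L}$ is a global Lagrange function and $\varepsilon$ is globally variational.

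For $(\Rightarrow)$, a global $\lambda$ with $E_\lambda=\varepsilon$ makes $d\Theta_\lambda$ a globally defined \emph{closed} $2$-form with $p_1d\Theta_\lambda=\varepsilon$, and I would identify it with $(\pi^{2,1})^*\alpha_\varepsilon$ through the uniqueness clause of Theorem~\ref{thm:LocalVariational}(f), with globality of $\lambda$ promoting the local identification to a global identity. Concretely, the difference $d\Theta_\lambda-(\pi^{2,1})^*\alpha_\varepsilon$ is closed, has vanishing horizontal part (there are no nonzero $\pi$-horizontal $2$-forms over the one-dimensional base, since $dt\wedge dt=0$) and vanishing $1$-contact part, hence is a closed, purely $2$-contact $2$-form with zero source form, which the uniqueness in Theorem~\ref{thm:LocalVariational}(f) forces to be zero.

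I expect the genuine obstacle to be precisely this $(\Rightarrow)$ step, where one must match the \emph{full} $2$-forms and not merely their $1$-contact parts: equality of the $p_1$-components is automatic, whereas controlling the $2$-contact component of $d\Theta_\lambda$ and seeing that it reproduces $\tfrac14(\partial\varepsilon_i/\partial\dot{x}^j-\partial\varepsilon_j/\partial\dot{x}^i)\,\omega^i\wedge\omega^j+(\partial\varepsilon_i/\partial\ddot{x}^j)\,\omega^i\wedge\dot\omega^j$ of \eqref{eq:ALFA-Lepage} is delicate. Reducing it to uniqueness, rather than to a brute-force coefficient comparison, is the efficient route; the only point then needing care is the reconciliation of jet orders between $\mathbb{R}\times T^3M$ and $\mathbb{R}\times TM$, which I would handle by the first-order reduction of the Lagrangian noted after Theorem~\ref{thm:LocalVariational}, ensuring that $d\Theta_\lambda$ is projectable onto $\mathbb{R}\times TM$ and thus genuinely independent of the choice of $\lambda$.
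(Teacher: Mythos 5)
Your proposal is correct, but it is worth noting at the outset that the paper itself gives \emph{no proof} of this theorem: it is quoted as a known result, with the notion of Lepage equivalent and the formula \eqref{eq:Cartan} attributed to Krupkov\'a \cite{Krupkova1986} and to Krupka, Krupkov\'a and Saunders \cite{KKS}. What you have written is essentially the standard argument from that literature, carried out in full, so there is no conflict with the paper --- you are supplying the omitted proof rather than diverging from one. Your ansatz $\Theta=\mathscr{L}\,dt+A_{i}\omega^{i}+B_{i}\dot{\omega}^{i}+C_{i}\ddot{\omega}^{i}$ is indeed the general $1$-form on $\mathbb{R}\times T^{3}M$ with $h\Theta=\lambda$, and the recursive determination $C_{i}=0$, $B_{i}=\partial\mathscr{L}/\partial\ddot{x}^{i}$, $A_{i}=\partial\mathscr{L}/\partial\dot{x}^{i}-(d/dt)\left(\partial\mathscr{L}/\partial\ddot{x}^{i}\right)$ from the vanishing of the $\dddot{\omega}^{i}\wedge dt$, $\ddot{\omega}^{i}\wedge dt$ and $\dot{\omega}^{i}\wedge dt$ coefficients of $p_{1}d\Theta$ is exactly right, as is the by-product $p_{1}d\Theta_{\lambda}=E_{\lambda}$, which the paper records in its remark. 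One small point worth making explicit: the coefficients $A_{i},B_{i},C_{i}$ may a priori depend on third-order velocities, but terms such as $\left(\partial A_{i}/\partial\dddot{x}^{j}\right)d\dddot{x}^{j}\wedge\omega^{i}$ contribute only to the $2$-contact part, so your coefficient comparison --- and hence the uniqueness --- is valid among \emph{all} contact extensions of $\lambda$, not merely those with low-order coefficients. For the equivalence, both directions are sound, and you correctly isolate the one genuine subtlety, the jet-order mismatch between $d\Theta_{\lambda}$ (on $\mathbb{R}\times T^{3}M$ for a second-order $\lambda$) and $\left(\pi^{2,1}\right)^{*}\alpha_{\varepsilon}$; your resolution via the first-order reduction licensed by the remark after Theorem \ref{thm:LocalVariational} is the right move, since it makes $d\Theta_{\lambda}$ a closed $2$-form on $\mathbb{R}\times TM$ with $p_{1}d\Theta_{\lambda}=\varepsilon$, whereupon the uniqueness clause of Theorem \ref{thm:LocalVariational}(f) forces $d\Theta_{\lambda}=\alpha_{\varepsilon}$; your alternative phrasing --- the difference form is closed, has no horizontal part over a one-dimensional base, and has vanishing $1$-contact part, hence is the unique Lepage equivalent of the zero source form --- is an equally valid application of the same uniqueness. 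The same reduction also tidies the $(\Leftarrow)$ direction, where applying $p_{1}$ yields a global second-order Lagrange function while the paper's definition of global variationality asks for one on $\mathbb{R}\times TM$; since you have already put the reduction on the table, this gap is closed as well.
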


\begin{rem}
$\Theta_{\lambda}$ \eqref{eq:Cartan} is the well-known \emph{Cartan
form} in Lagrangian mechanics; cf. Krupka, Krupkov\textcolor{black}{\'a}
and Saunders \cite{KKS}. Since $\Theta_{\lambda}$ is the Lepage
equivalent of Lagrangian $\lambda$, it satisfies $p_{1}d\Theta_{\lambda}=E_{\lambda}$.
We also point out $\Theta_{\lambda}$ depends on the choice of a~Lagrangian
$\lambda$ whereas $d\Theta_{\lambda}$ \emph{does not}; $d\Theta_{\lambda}$
is decomposable as $d\Theta_{\lambda}=E_{\lambda}+F$, where $E_{\lambda}$
\eqref{eq:EL-form} is the Euler-Lagrange form of $\lambda$, and
$F$ is a~$2$-contact $2$-form.
\end{rem}

\section{Exactness equation for Lepage 2-forms on $\mathbb{R}\times T^{2}M$}

Let $\varepsilon$ be a locally variational source form on $\mathbb{R}\times T^{2}M$,
and $\alpha_{\varepsilon}$ be the Lepage equivalent of $\varepsilon$
(Theorem \ref{thm:LocalVariational}, (f), \eqref{eq:ALFA-Lepage}).
Since $\alpha_{\varepsilon}$ is closed, it is also \emph{locally
exact} according to the Poincar\'e lemma. In this section, we study
the exactness equation for\emph{ }Lepage 2-form $\alpha_{\varepsilon}$
\emph{globally}, with the aim to construct a~\emph{global Lagrangian},
provided $\varepsilon$ is in addition globally variational. That
is, we search for a~solution $\mu$ on $\mathbb{R}\times TM$ of
the equation
\begin{equation}
\alpha_{\varepsilon}=d\mu.\label{eq:AlfaExactnessEq}
\end{equation}
Clearly, equation \eqref{eq:AlfaExactnessEq} need not have a~solution,
and even if solvability of \eqref{eq:AlfaExactnessEq} is assured,\emph{
no general} \emph{construction} of its solution is known.

Properties of the Cartan equivalent $\Theta_{\lambda}$ of a~global
Lagrangian $\lambda$ imply the following straightforward observation.
\begin{lem}
\label{lem:Lemma}Let $\alpha_{\varepsilon}$ be the Lepage equivalent
of a~globally variational source form $\varepsilon$. Suppose that
a~$1$-form $\mu$ on $\mathbb{R}\times TM$ is a~solution of \eqref{eq:AlfaExactnessEq}.
Then the horizontal component $h\mu$ of $\mu$ is a~Lagrangian on
$\mathbb{R}\times T^{2}M$ for $\varepsilon$.
\end{lem}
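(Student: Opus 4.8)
The plan is to unwind the definitions and compare contact degrees. We are given a solution $\mu$ of $\alpha_{\varepsilon}=d\mu$ on $\mathbb{R}\times TM$, and we must show that its horizontalization $h\mu$ is a Lagrangian for $\varepsilon$, i.e. a $\pi^{2}$-horizontal $1$-form whose Euler--Lagrange form equals $\varepsilon$. Since $h\mu=\mathscr{L}\,dt$ for some function $\mathscr{L}$ on $\mathbb{R}\times T^{2}M$ by definition of the horizontalization, it is automatically $\pi^{2}$-horizontal; the content of the lemma is therefore the identity $E_{h\mu}=\varepsilon$, equivalently $E_{i}(\mathscr{L})=\varepsilon_{i}$ for every fibered chart.

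First I would apply the pullback $(\pi^{2,1})^{*}$ to the exactness equation and take the $1$-contact component $p_{1}$ of both sides. On the left, $p_{1}\alpha_{\varepsilon}=\varepsilon$ by Theorem~\ref{thm:LocalVariational}(f), since $\alpha_{\varepsilon}$ is exactly the Lepage equivalent whose $1$-contact part recovers the source form. On the right, I would use the commutation of $p_{1}$ with $d$ together with the projection property of the horizontalization, namely that for a $1$-form $\mu$ one has the decomposition $(\pi^{2,1})^{*}\mu=h\mu+p\mu$ and the key relation $p_{1}\,d\,(p\mu)$ contributes only higher-contact terms, so that $p_{1}\,d\mu$ depends on $\mu$ only through $h\mu$. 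This is precisely the statement that $p_{1}d$ factors through the horizontal component, which is the defining feature of the Euler--Lagrange morphism in the variational sequence: $p_{1}\,d\mu = E_{h\mu}$.

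Combining these two computations gives $\varepsilon=p_{1}\alpha_{\varepsilon}=p_{1}\,d\mu=E_{h\mu}$, which is the assertion. To make the middle equality rigorous I would invoke the Remark following the Cartan form theorem: the horizontal $1$-form $h\mu$, regarded as a Lagrangian $\lambda'=h\mu$, has its own Lepage (Cartan) equivalent $\Theta_{\lambda'}$ satisfying $p_{1}d\Theta_{\lambda'}=E_{\lambda'}$, and one checks that $d\mu$ and $d\Theta_{h\mu}$ have the same $1$-contact component because they share the same horizontal part $h\mu$ and because $p_{1}$ annihilates the discrepancy in their purely contact parts.

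The main obstacle I anticipate is justifying carefully that $p_{1}\,d\mu$ is insensitive to the contact part $p\mu$ of $\mu$, i.e. that no term arising from $d(p\mu)$ survives in $1$-contact degree in a way that would corrupt the Euler--Lagrange expression. Concretely, writing $\mu=\mathscr{L}\,dt+\Phi_{i}\,\omega^{i}$ in a fibered chart, I would expand $d\mu$, collect the $\omega^{i}\wedge dt$ terms, and verify by the standard integration-by-parts rearrangement (applying $d/dt$ to the relevant coefficients) that the $1$-contact component reduces exactly to $E_{i}(\mathscr{L})\,\omega^{i}\wedge dt$, the contributions of the $\Phi_{i}$ cancelling against the derivatives of $\mathscr{L}$ in the total-derivative operators. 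This is a routine but slightly delicate chart computation; once it is in place the global conclusion follows, since $E_{h\mu}=\varepsilon$ holds chart-by-chart and both sides are globally defined forms.
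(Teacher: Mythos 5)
Your overall strategy is genuinely different from the paper's and, once repaired, proves slightly more. The paper's proof is two lines long and leans on the hypothesis of global variationality: there exists a global Lagrangian $\lambda$ with $\alpha_{\varepsilon}=d\Theta_{\lambda}$, hence $d\left(\mu-\Theta_{\lambda}\right)=0$, so $\mu=\Theta_{\lambda}+df$ (locally, which suffices, since $h(df)$ only adds a total derivative to the Lagrange function and the identity $E_{h\mu}=\varepsilon$ is chart-local), and $h\mu=\lambda+h\left(df\right)$ is a Lagrangian equivalent to $\lambda$. Your computation of $p_{1}d\mu$ never invokes the pre-existing global Lagrangian, so in your version global variationality comes out as a \emph{consequence} of solvability of $\alpha_{\varepsilon}=d\mu$ rather than being assumed; this is exactly the form of the statement used later (Corollary \ref{cor:Simple} infers global variationality from the constructed solution $\mu_{0}+\kappa$), so your route is arguably better adapted to the application, at the price of an explicit contact-degree computation where the paper has an abstract two-liner.

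However, two steps in your justification need fixing. First, the claim that $p_{1}d\left(p\mu\right)$ contributes only higher-contact terms --- equivalently, that $p_{1}d\mu=E_{h\mu}$ unconditionally --- is false, and the same circularity affects your appeal to $d\Theta_{h\mu}$ (two $1$-forms with the same horizontal part need not have $d$'s with the same $1$-contact part). Indeed, for $\mu=Q_{i}\omega^{i}$ one has $h\mu=0$, yet
\[
p_{1}d\mu=-\frac{dQ_{i}}{dt}\,\omega^{i}\wedge dt-Q_{i}\,\dot{\omega}^{i}\wedge dt\neq0;
\]
the identity $p_{1}d\mu=E_{h\mu}$ characterizes \emph{Lepage} $1$-forms, not arbitrary ones (in the variational sequence it is the interior Euler operator applied to $p_{1}d\mu$, not $p_{1}d\mu$ itself, that yields $E_{h\mu}$). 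The repair comes from the hypothesis itself: since $p_{1}d\mu=p_{1}\alpha_{\varepsilon}=\varepsilon$ is $\pi^{2,0}$-horizontal, the $\dot{\omega}^{j}\wedge dt$-coefficients of $p_{1}d\mu$ must vanish, i.e. $\mu$ is \emph{forced} to be a Lepage $1$-form, and only then is your chain of equalities valid. Second, your chart ansatz $\mu=\mathscr{L}\,dt+\Phi_{i}\,\omega^{i}$ is not general: a $1$-form on $\mathbb{R}\times TM$ also carries $R_{i}\,d\dot{x}^{i}$ components, and these are precisely the reason why $h\mu$ lives on $\mathbb{R}\times T^{2}M$ with Lagrange function $\mathscr{L}=P+Q_{i}\dot{x}^{i}+R_{i}\ddot{x}^{i}$ affine in the second derivatives, as the lemma asserts. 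With the full ansatz $\mu=P\,dt+Q_{i}\,dx^{i}+R_{i}\,d\dot{x}^{i}$ one computes
\[
p_{1}d\mu=\left(\frac{\partial\mathscr{L}}{\partial x^{j}}-\frac{dQ_{j}}{dt}\right)\omega^{j}\wedge dt+\left(\frac{\partial\mathscr{L}}{\partial\dot{x}^{j}}-Q_{j}-\frac{d}{dt}\frac{\partial\mathscr{L}}{\partial\ddot{x}^{j}}\right)\dot{\omega}^{j}\wedge dt,
\]
and equating with $\varepsilon_{j}\,\omega^{j}\wedge dt$ determines $Q_{j}=\partial\mathscr{L}/\partial\dot{x}^{j}-\left(d/dt\right)\partial\mathscr{L}/\partial\ddot{x}^{j}$ from the $\dot{\omega}$-terms and then $\varepsilon_{j}=E_{j}\left(\mathscr{L}\right)$ from the $\omega$-terms --- exactly the cancellation you intended, now with the $R_{i}$ accounted for. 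With these two corrections your proof is complete and rigorous.
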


\begin{proof}
Since the Cartan equivalent~$\Theta_{\lambda}$ of a~global Lagrangian
$\lambda$ for $\varepsilon$ obeys the property $\alpha_{\varepsilon}=d\Theta_{\lambda}$,
we obtain $\mu=\Theta_{\lambda}+df$ for some function $f$ hence
$h\mu=\lambda+h\left(df\right)$. Thus, $h\mu$ and $\lambda$ are
equivalent Lagrangians for $\varepsilon$, whose Lagrange functions
differ by means of total derivative of $f$.
\end{proof}
The next lemma describes a~\emph{global} decomposition of $\alpha_{\varepsilon}$
into closed forms.
\begin{lem}
\label{lem:AlfaClosed}Let $\alpha_{\varepsilon}$ be the Lepage equivalent
of a~locally variational source form $\varepsilon$ on $\mathbb{R}\times T^{2}M$.
Then there is a~unique decomposition of $\alpha_{\varepsilon}$ on
$\mathbb{R}\times TM$,
\begin{equation}
\alpha_{\varepsilon}=\alpha_{0}\wedge dt+\alpha',\label{eq:ALFA--Decomp}
\end{equation}
where $\alpha_{0}$ and $\alpha'$ are closed forms defined on $TM$.
With respect to a~chart $\left(V,\psi\right)$, $\psi=\left(x^{i}\right)$,
on $M$, we have
\begin{align}
\alpha_{0} & =\left(A_{i}-\frac{1}{2}\left(\frac{\partial A_{i}}{\partial\dot{x}^{j}}-\frac{\partial A_{j}}{\partial\dot{x}^{i}}\right)\dot{x}^{j}\right)dx^{i}+B_{ij}\dot{x}^{j}d\dot{x}^{i},\label{eq:ALFA--0}
\end{align}
and
\begin{equation}
\alpha'=\frac{1}{4}\left(\frac{\partial A_{i}}{\partial\dot{x}^{j}}-\frac{\partial A_{j}}{\partial\dot{x}^{i}}\right)dx^{i}\wedge dx^{j}+B_{ij}dx^{i}\wedge d\dot{x}^{j}.\label{eq:ALFA--prime}
\end{equation}
\end{lem}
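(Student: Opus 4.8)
The plan is to verify the decomposition by a direct chart computation and then promote it to a global, unique, closed statement by an invariant contraction argument.

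First I would start from the chart expression \eqref{eq:ALFA-Lepage} of the Lepage equivalent and insert the variationality normal form $\varepsilon_i = A_i + B_{ij}\ddot{x}^j$ from Theorem \ref{thm:LocalVariational}(c), \eqref{eq:Epsilon-AB}, together with $\partial\varepsilon_i/\partial\ddot{x}^j = B_{ij}$ and $\partial\varepsilon_i/\partial\dot{x}^j = \partial A_i/\partial\dot{x}^j + (\partial B_{ik}/\partial\dot{x}^j)\ddot{x}^k$. Expanding $\omega^i = dx^i - \dot{x}^i dt$ and $\dot{\omega}^j = d\dot{x}^j - \ddot{x}^j dt$, the three terms of \eqref{eq:ALFA-Lepage} produce summands of the types $dx\wedge dt$, $d\dot{x}\wedge dt$, $dx\wedge dx$ and $dx\wedge d\dot{x}$. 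The key point -- and the step I expect to carry the weight of the lemma -- is that all the second-velocity ($\ddot{x}$) contributions cancel: the term $B_{ij}\ddot{x}^j\,dx^i\wedge dt$ coming from $\varepsilon_i\omega^i\wedge dt$ is exactly killed by the corresponding piece of $B_{ij}\,\omega^i\wedge\dot{\omega}^j$, while in the $2$-contact term the coefficient $\partial\varepsilon_i/\partial\dot{x}^j - \partial\varepsilon_j/\partial\dot{x}^i$ loses its $\ddot{x}$-dependence precisely because of the Helmholtz symmetry $\partial B_{ik}/\partial\dot{x}^j = \partial B_{jk}/\partial\dot{x}^i$ in \eqref{eq:HelmholtzAB-1}. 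Using also $B_{ij}=B_{ji}$ to symmetrize the $d\dot{x}\wedge dt$ terms, the survivors collect into the stated $\alpha_0$ \eqref{eq:ALFA--0} and $\alpha'$ \eqref{eq:ALFA--prime}.

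Next I would record that both $\alpha_0$ and $\alpha'$, as just computed, have coefficients built from $A_i$, $B_{ij}$ and their first $\dot{x}$-derivatives and involve only $dx^i$ and $d\dot{x}^i$; in particular they contain neither $t$, $\ddot{x}$, nor $dt$, so they are (pullbacks of) forms on $TM$. To make the splitting $\alpha_\varepsilon = \alpha_0\wedge dt + \alpha'$ global and unique I would argue invariantly with the canonical vector field $\partial/\partial t$ on $\mathbb{R}\times TM$: setting $\alpha_0 := -i_{\partial/\partial t}\alpha_\varepsilon$ and $\alpha' := \alpha_\varepsilon - \alpha_0\wedge dt$, the identities $i_{\partial/\partial t}\,i_{\partial/\partial t}=0$ and $\mathcal{L}_{\partial/\partial t}\alpha_\varepsilon = 0$ (the latter because the chart coefficients of $\alpha_\varepsilon$ are $t$-independent) give $i_{\partial/\partial t}\alpha_0 = i_{\partial/\partial t}\alpha' = 0$ and $\mathcal{L}_{\partial/\partial t}\alpha_0 = \mathcal{L}_{\partial/\partial t}\alpha' = 0$. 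Hence both forms descend to $TM$, and the formula $\alpha_0 = -i_{\partial/\partial t}\alpha_\varepsilon$ forces uniqueness of the decomposition.

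Finally, closedness. Since $\varepsilon$ is locally variational, $\alpha_\varepsilon$ is closed by Theorem \ref{thm:LocalVariational}(f); expanding $d\alpha_\varepsilon = d\alpha_0\wedge dt + d\alpha' = 0$ and applying $i_{\partial/\partial t}$ isolates $d\alpha_0 = 0$ from the $dt$-part and then $d\alpha' = 0$ from the $dt$-free part, since $d\alpha_0$ and $d\alpha'$ are again pulled back from $TM$ and hence lie in complementary subspaces. Thus $\alpha_0$ and $\alpha'$ are closed on $TM$, as claimed. Alternatively, closedness could be checked by hand in the chart, where $d\alpha_0 = 0$ and $d\alpha' = 0$ reduce to the cyclic and symmetry identities \eqref{eq:HelmholtzAB-1}--\eqref{eq:HelmholtzAB-3} together with the dependent identity \eqref{eq:HelmholtzAB-Dependent}; the contraction argument is cleaner and I would prefer it. The main obstacle throughout is the $\ddot{x}$-cancellation in the first step, which is exactly what guarantees that the two closed factors genuinely live on $TM$ rather than on $T^2M$.
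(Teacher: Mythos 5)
Your proposal is correct, and two of its three steps coincide with the paper's proof: the chart computation (which the paper compresses into ``straightforward to verify''---you rightly isolate the Helmholtz symmetry $\partial B_{ik}/\partial\dot{x}^{j}=\partial B_{jk}/\partial\dot{x}^{i}$ of \eqref{eq:HelmholtzAB-1} as the reason the $\ddot{x}$-dependence drops out of the $2$-contact coefficient, while the $B_{ij}\ddot{x}^{j}dx^{i}\wedge dt$ terms cancel between the first and third summands of \eqref{eq:ALFA-Lepage}), and the closedness argument (the paper likewise splits $d\alpha_{\varepsilon}=0$ into the $dt$-part and the $dt$-free part). Where you genuinely diverge is the globality step, which is the bulk of the paper's proof: there the authors verify by hand that the local expression \eqref{eq:ALFA--prime} transforms correctly under a change of chart $x^{i}=x^{i}\left(\bar{x}^{j}\right)$ on $M$, deriving the transformation laws \eqref{eq:Aux1}, \eqref{eq:Aux2}, and \eqref{eq:Aux3} for $A_{i}$, $B_{ij}$ and $\partial A_{i}/\partial\dot{x}^{j}-\partial A_{j}/\partial\dot{x}^{i}$ (the last again via \eqref{eq:HelmholtzAB-1}) and recombining the terms. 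You instead set $\alpha_{0}:=-i_{\partial/\partial t}\alpha_{\varepsilon}$ invariantly---the sign is right---using that $\alpha_{\varepsilon}$ is globally defined on $\mathbb{R}\times TM$ by Theorem \ref{thm:LocalVariational}, (f), and conclude that $\alpha_{0}$ and $\alpha'=\alpha_{\varepsilon}-\alpha_{0}\wedge dt$ are basic for the projection $\mathbb{R}\times TM\rightarrow TM$ from $i_{\partial/\partial t}$-horizontality together with $\mathcal{L}_{\partial/\partial t}$-invariance; the latter is legitimate precisely because the paper restricts to autonomous systems, so the coefficients of $\alpha_{\varepsilon}$ in every induced chart are $t$-independent, and the transition functions between induced charts come from $M$. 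Your route is cleaner, and it yields the uniqueness of the decomposition for free (contraction with $\partial/\partial t$ determines $\alpha_{0}$), a point the paper leaves implicit. What the paper's longer computation buys is the explicit identities \eqref{eq:Aux1}--\eqref{eq:Aux3}, which are reused verbatim in the proof of Theorem \ref{thm:Global} to show that $\omega$ and $\kappa$ are global on $TM$; your argument does not produce them, so in the larger scheme of the paper they would still have to be derived separately.
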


\begin{proof}
\textit{\emph{In every chart}} $\left(V,\psi\right)$, $\psi=\left(x^{i}\right)$,
on $M$, it is straightforward to verify that decomposition of $\alpha_{\varepsilon}$
\eqref{eq:ALFA--Decomp} holds for $\alpha_{0}$ and $\alpha'$, given
by formulas \eqref{eq:ALFA--0} and \eqref{eq:ALFA--prime}. Since
the Lepage equivalent $\alpha_{\varepsilon}$ of $\varepsilon$ is
closed, and $\alpha_{0}$, $\alpha'$ do not contain $dt$, it easily
follows that both $\alpha_{0}$, $\alpha'$ must be closed. To verify
that for instance $d\alpha'$ vanishes, we can also proceed directly
with the help of Helmholtz conditions \eqref{eq:HelmholtzAB-1}\textendash \eqref{eq:HelmholtzAB-3}
and \eqref{eq:HelmholtzAB-Dependent}.

It remains to show that the $2$-forms $\alpha_{0}$, $\alpha'$ are
(globally) defined on the tangent bundle $TM$. Since $\alpha_{\varepsilon}$
is globally defined on $\mathbb{R}\times TM$, it is sufficient to
show that $\alpha'$ is defined on $TM$. For an arbitrary coordinate
transformation $x^{i}=x^{i}\left(\bar{x}^{j}\right)$ on $M$, we
get the following identities,
\begin{align}
 & A_{i}=\bar{A}_{k}\frac{\partial\bar{x}^{k}}{\partial x^{i}}+\bar{B}_{kl}\frac{\partial\bar{x}^{k}}{\partial x^{i}}\frac{\partial^{2}\bar{x}^{l}}{\partial x^{p}x^{q}}\dot{x}^{p}\dot{x}^{q},\label{eq:Aux1}\\
 & B_{ij}=\bar{B}_{kl}\frac{\partial\bar{x}^{k}}{\partial x^{i}}\frac{\partial\bar{x}^{l}}{\partial x^{j}}.\label{eq:Aux2}
\end{align}
Differentiating \eqref{eq:Aux1} we obtain with the help of the Helmholtz
condition \eqref{eq:HelmholtzAB-1},
\begin{align}
 & \frac{\partial A_{i}}{\partial\dot{x}^{j}}-\frac{\partial A_{j}}{\partial\dot{x}^{i}}\label{eq:Aux3}\\
 & \;=\frac{\partial\bar{A}_{k}}{\partial\dot{\bar{x}}^{l}}\left(\frac{\partial\bar{x}^{l}}{\partial x^{j}}\frac{\partial\bar{x}^{k}}{\partial x^{i}}-\frac{\partial\bar{x}^{l}}{\partial x^{i}}\frac{\partial\bar{x}^{k}}{\partial x^{j}}\right)+2\bar{B}_{kl}\left(\frac{\partial\bar{x}^{k}}{\partial x^{i}}\frac{\partial^{2}\bar{x}^{l}}{\partial x^{j}x^{p}}-\frac{\partial\bar{x}^{k}}{\partial x^{j}}\frac{\partial^{2}\bar{x}^{l}}{\partial x^{i}x^{p}}\right)\dot{x}^{p}.\nonumber 
\end{align}
From \eqref{eq:Aux2}, \eqref{eq:Aux3}, and using the transformation
formulas
\begin{align*}
 & \frac{\partial x^{i}}{\partial\bar{x}^{p}}\frac{\partial\bar{x}^{p}}{\partial x^{j}}=\delta_{j}^{i},\quad\frac{\partial^{2}\bar{x}^{l}}{\partial x^{i}x^{j}}\frac{\partial x^{i}}{\partial\bar{x}^{p}}\frac{\partial x^{j}}{\partial\bar{x}^{q}}=-\frac{\partial\bar{x}^{l}}{\partial x^{j}}\frac{\partial^{2}x^{j}}{\partial\bar{x}^{p}\partial\bar{x}^{q}},
\end{align*}
we now obtain
\begin{align*}
 & \frac{1}{4}\left(\frac{\partial A_{i}}{\partial\dot{x}^{j}}-\frac{\partial A_{j}}{\partial\dot{x}^{i}}\right)dx^{i}\wedge dx^{j}+B_{ij}dx^{i}\wedge d\dot{x}^{j}\\
 & =\frac{1}{4}\left(\frac{\partial\bar{A}_{k}}{\partial\dot{\bar{x}}^{l}}-\frac{\partial\bar{A}_{l}}{\partial\dot{\bar{x}}^{k}}\right)d\bar{x}^{k}\wedge d\bar{x}^{l}+\bar{B}_{kl}\frac{\partial^{2}\bar{x}^{l}}{\partial x^{i}x^{j}}\frac{\partial x^{i}}{\partial\bar{x}^{p}}\frac{\partial x^{j}}{\partial\bar{x}^{q}}\dot{\bar{x}}^{q}d\bar{x}^{k}\wedge d\bar{x}^{p}\\
 & +\bar{B}_{kl}\frac{\partial\bar{x}^{l}}{\partial x^{j}}\frac{\partial^{2}x^{j}}{\partial\bar{x}^{p}\partial\bar{x}^{q}}\dot{\bar{x}}^{p}d\bar{x}^{k}\wedge d\bar{x}^{q}+\bar{B}_{kl}\frac{\partial\bar{x}^{l}}{\partial x^{j}}\frac{\partial x^{j}}{\partial\bar{x}^{q}}d\bar{x}^{k}\wedge d\dot{\bar{x}}^{q}\\
 & =\frac{1}{4}\left(\frac{\partial\bar{A}_{k}}{\partial\dot{\bar{x}}^{l}}-\frac{\partial\bar{A}_{l}}{\partial\dot{\bar{x}}^{k}}\right)d\bar{x}^{k}\wedge d\bar{x}^{l}+\bar{B}_{kl}d\bar{x}^{k}\wedge d\dot{\bar{x}}^{l},
\end{align*}
as required.
\end{proof}
\begin{lem}
\label{lem:Mu0}The equation 
\begin{equation}
\alpha_{0}\wedge dt=d\mu_{0}\label{eq:Alfa0-Exact}
\end{equation}
has always a solution $\mu_{0}=-t\alpha_{0}$ defined on $\mathbb{R}\times TM$.
With respect to a~chart $\left(V,\psi\right)$, $\psi=\left(x^{i}\right)$,
on $M$, $\mu_{0}$ is expressed as
\begin{align}
{\normalcolor {\color{red}}} & \mu_{0}=-\left(A_{i}-\frac{1}{2}\left(\frac{\partial A_{i}}{\partial\dot{x}^{j}}-\frac{\partial A_{j}}{\partial\dot{x}^{i}}\right)\dot{x}^{j}\right)tdx^{i}-B_{ij}\dot{x}^{i}td\dot{x}^{j},\label{eq:Mu0}
\end{align}
and the horizontal component $h\mu_{0}$ of $\mu_{0}$, defined on
$\mathbb{R}\times T^{2}M$, reads
\begin{equation}
h\mu_{0}=-\varepsilon_{i}\dot{x}^{i}tdt,\label{eq:H-Mu0}
\end{equation}
where $\varepsilon_{i}=A_{i}+B_{ij}\ddot{x}^{j}$, see \eqref{eq:Epsilon-AB}.
\end{lem}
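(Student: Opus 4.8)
The plan is to verify the three assertions in turn, all by direct computation, the only genuine input being the closedness of $\alpha_{0}$ furnished by Lemma \ref{lem:AlfaClosed} together with the defining rules for the horizontalization $h$.

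First I would check that $\mu_{0}=-t\alpha_{0}$ solves \eqref{eq:Alfa0-Exact}. Since $\alpha_{0}$ is a~$1$-form on $TM$ that contains neither $t$ nor $dt$, the Leibniz rule gives $d\mu_{0}=-d\left(t\alpha_{0}\right)=-dt\wedge\alpha_{0}-t\,d\alpha_{0}$. By Lemma \ref{lem:AlfaClosed} the form $\alpha_{0}$ is closed, so $d\alpha_{0}=0$, and because $\alpha_{0}$ is of degree $1$ we have $-dt\wedge\alpha_{0}=\alpha_{0}\wedge dt$. Hence $d\mu_{0}=\alpha_{0}\wedge dt$, as required. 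This solution is manifestly global on $\mathbb{R}\times TM$, since $t$ is a~global coordinate on the first factor and $\alpha_{0}$ is globally defined on $TM$.

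The chart expression \eqref{eq:Mu0} then follows by substituting \eqref{eq:ALFA--0} into $\mu_{0}=-t\alpha_{0}$ and distributing the factor $-t$; the symmetry $B_{ij}=B_{ji}$ from \eqref{eq:HelmholtzAB-1} lets me relabel the summation indices in the coefficient of $t\,d\dot{x}^{j}$ so as to match the stated term $-B_{ij}\dot{x}^{i}t\,d\dot{x}^{j}$.

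Finally, to obtain \eqref{eq:H-Mu0} I would apply the horizontalization rules $h\left(t\,dx^{i}\right)=t\dot{x}^{i}\,dt$ and $h\left(t\,d\dot{x}^{j}\right)=t\ddot{x}^{j}\,dt$ termwise to \eqref{eq:Mu0}. The decisive simplification is that the contribution coming from the antisymmetric factor $\partial A_{i}/\partial\dot{x}^{j}-\partial A_{j}/\partial\dot{x}^{i}$ is contracted against the symmetric product $\dot{x}^{i}\dot{x}^{j}$ and therefore vanishes identically. What survives is $h\mu_{0}=-t\left(A_{i}\dot{x}^{i}+B_{ij}\dot{x}^{i}\ddot{x}^{j}\right)dt$, and recognizing $A_{i}+B_{ij}\ddot{x}^{j}=\varepsilon_{i}$ from \eqref{eq:Epsilon-AB} delivers exactly $h\mu_{0}=-\varepsilon_{i}\dot{x}^{i}t\,dt$. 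I do not expect any real obstacle here: once the closedness of $\alpha_{0}$ is invoked, every step is a~routine verification, and the antisymmetric--symmetric cancellation is the only point requiring a~moment's attention.
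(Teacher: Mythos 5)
Your proposal is correct and follows essentially the same route as the paper's (much terser) proof: globality and closedness of $\alpha_{0}$ from Lemma \ref{lem:AlfaClosed}, the Leibniz-rule computation $d\left(-t\alpha_{0}\right)=-dt\wedge\alpha_{0}=\alpha_{0}\wedge dt$, and termwise application of $h$ with the antisymmetric--symmetric cancellation yielding $h\mu_{0}=-\varepsilon_{i}\dot{x}^{i}t\,dt$. You have merely written out explicitly the steps the paper calls a ``straightforward calculation,'' so no further changes are needed.
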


\begin{proof}
From Lemma \ref{lem:AlfaClosed} it follows that the $2$-form $\mu_{0}=-t\alpha_{0}$
is globally defined on $\mathbb{R}\times TM$, and a~straightforward
calculation shows that $\mu_{0}$ solves equation \eqref{eq:Alfa0-Exact}.
The expression \eqref{eq:H-Mu0} can be then easily obtained in a~chart
by applying the horizontal morphism $h$ to the expression \eqref{eq:Mu0}.
\end{proof}
We now study the equation
\[
\alpha'=d\mu',
\]
where $\alpha'$ is given by formula \eqref{eq:ALFA--prime}. To this
purpose we define canonical \emph{local} sections and homotopy operators
as follows.

Let $\left(V,\psi\right)$, $\psi=\left(x^{i}\right)$, be a~fixed
chart on $M$, and $\left(V^{1},\psi^{1}\right)$, $\psi^{1}=\left(x^{i},\dot{x}^{i}\right)$,
be the associated chart on $TM$. We put for every $l$, $1\leq l\leq m$,
\begin{align}
 & \pi_{l}^{1}\left(x^{i},\dot{x}^{l},\dot{x}^{k}\right)=\left(x^{i},\dot{x}^{k}\right),\quad1\leq i\leq m,\quad l+1\leq k\leq m,\label{eq:Projekce}
\end{align}

and
\begin{align}
 & s_{l,\nu_{l}}^{1}\left(x^{i},\dot{x}^{k}\right)=\left(x^{i},\nu_{l},\dot{x}^{k}\right),\quad1\leq i\leq m,\quad l+1\leq k\leq m.\label{eq:Rezy}
\end{align}
Local projections $\pi_{l}^{1}$, defined by formula \eqref{eq:Projekce},
map open subsets $V_{l}^{1}$ of the chart domain $V^{1}$, described
by equations $\dot{x}^{j}=0$, $1\leq j\leq l-1$, onto $V_{l+1}^{1}$,
whereas local sections $s_{l,\nu_{l}}^{1}$ \eqref{eq:Rezy} of $\pi_{l}^{1}$
map $V_{l+1}^{1}$ into $V_{l}^{1}$ for every $l$, $1\leq l\leq m$.
Note that in this notation $V_{1}^{1}=V^{1}$, $V_{m+1}^{1}=V$ are
the chart domains in $TM$ and $M$, respectively.

Define local homotopy operators as follows. For every $l$, $1\leq l\leq m$,
let $K_{l}$ acts on (local) differential forms defined on $V_{l}^{1}\subset V^{1}$
by the formula
\begin{equation}
K_{l}\rho=\intop_{0}^{\dot{x}^{l}}\left(\pi_{l}^{1}\right)^{*}\left(s_{l,\nu_{l}}^{1}\right)^{*}\left(i_{\frac{\partial}{\partial\dot{x}^{l}}}\rho\right)d\nu^{l},\label{eq:K-homotopy}
\end{equation}
where $\pi_{l}^{1}$ and $s_{l,\nu_{l}}^{1}$ are given by \eqref{eq:Projekce}
and \eqref{eq:Rezy}, and the integration operation in \eqref{eq:K-homotopy}
is applied on\emph{ coefficients} of the corresponding differential
form.
\begin{thm}
\label{thm:K-local}Let $\alpha_{\varepsilon}$ be the Lepage equivalent
of $\varepsilon$, and let $\alpha'$ be the uniquelly given $2$-form
by means of the decomposition \eqref{eq:ALFA--Decomp}, with local
expression \eqref{eq:ALFA--prime}. Then
\begin{equation}
\alpha'-\omega=d\kappa,\label{eq:KEq}
\end{equation}
where
\begin{align}
\omega & =\left(s_{1,0}^{1}\circ s_{2,0}^{1}\circ\ldots\circ s_{m,0}^{1}\circ\pi_{m}^{1}\circ\pi_{m-1}^{1}\circ\ldots\circ\pi_{1}^{1}\right)^{*}\alpha'\label{eq:Omega}\\
 & =\frac{1}{4}\left(\frac{\partial A_{i}}{\partial\dot{x}^{j}}-\frac{\partial A_{j}}{\partial\dot{x}^{i}}\right)_{\left(x^{p},0\right)}dx^{i}\wedge dx^{j},\nonumber 
\end{align}
and
\begin{align}
\kappa & =\sum_{l=1}^{m}\left(\pi_{1}^{1}\right)^{*}\left(\pi_{2}^{1}\right)^{*}\ldots\left(\pi_{l-1}^{1}\right)^{*}K_{l}\left(\left(s_{l-1,0}^{1}\right)^{*}\ldots\left(s_{2,0}^{1}\right)^{*}\left(s_{1,0}^{1}\right)^{*}\alpha'\right)\label{eq:KappaFormula}\\
 & =-\sum_{l=1}^{m}\intop_{0}^{\dot{x}^{l}}B_{jl}\left(x^{p},0,\ldots,0,\nu_{(l)},\dot{x}^{l+1},\ldots,\dot{x}^{m}\right)d\nu_{(l)}\cdot dx^{j}.\nonumber 
\end{align}
\end{thm}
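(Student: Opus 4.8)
The plan is to integrate out the velocity coordinates $\dot x^{1},\dots,\dot x^{m}$ one at a time, each step governed by a one-variable Poincar\'e homotopy, and then to identify $\omega$ as what survives after pulling $\alpha'$ back to the zero section of $TM\to M$. The first step is to establish, for each $l$, the single-variable homotopy identity for the operator $K_{l}$ of \eqref{eq:K-homotopy}. Writing $\sigma_{l}=s^{1}_{l,0}\circ\pi^{1}_{l}$ for the map on $V^{1}_{l}$ that sets $\dot x^{l}=0$, I claim that on forms $\rho$ defined on $V^{1}_{l}$,
\[
dK_{l}\rho+K_{l}\,d\rho=\rho-\sigma_{l}^{*}\rho .
\]
This is the standard fibre-integration formula: from $\tfrac{d}{d\nu^{l}}(s^{1}_{l,\nu_{l}})^{*}\rho=(s^{1}_{l,\nu_{l}})^{*}\mathcal{L}_{\partial/\partial\dot x^{l}}\rho$, Cartan's identity $\mathcal{L}_{\partial/\partial\dot x^{l}}=d\,i_{\partial/\partial\dot x^{l}}+i_{\partial/\partial\dot x^{l}}\,d$, and the fundamental theorem of calculus, integrating from $0$ to $\dot x^{l}$ and pulling back by $\pi^{1}_{l}$ gives the displayed identity, the lower limit $\nu^{l}=0$ producing the slice term $\sigma_{l}^{*}\rho$.

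Next I would telescope over $l=1,\dots,m$. Put $\rho_{1}=\alpha'$ and $\rho_{l+1}=(s^{1}_{l,0})^{*}\rho_{l}$, so that $\rho_{l}=(s^{1}_{l-1,0})^{*}\cdots(s^{1}_{1,0})^{*}\alpha'$ is a form on $V^{1}_{l}$. By Lemma \ref{lem:AlfaClosed} the form $\alpha'$ is closed, and since pullback commutes with $d$, each $\rho_{l}$ is closed; hence $K_{l}\,d\rho_{l}=0$ and the homotopy identity collapses to $\rho_{l}=dK_{l}\rho_{l}+(\pi^{1}_{l})^{*}\rho_{l+1}$. Applying $(\pi^{1}_{1})^{*}\cdots(\pi^{1}_{l-1})^{*}$, which commutes with $d$, and summing the resulting telescope yields
\[
\alpha'=d\Bigl(\sum_{l=1}^{m}(\pi^{1}_{1})^{*}\cdots(\pi^{1}_{l-1})^{*}K_{l}\rho_{l}\Bigr)+(\pi^{1}_{1})^{*}\cdots(\pi^{1}_{m})^{*}\rho_{m+1}.
\]
The first summand is precisely $d\kappa$ with $\kappa$ as in \eqref{eq:KappaFormula}, while the last term is $\omega$, namely the pullback of $\alpha'$ by the full zero-section map of \eqref{eq:Omega}; together these establish \eqref{eq:KEq}.

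Finally I would read off the coordinate expressions. The full composite $s^{1}_{1,0}\circ\cdots\circ s^{1}_{m,0}\circ\pi^{1}_{m}\circ\cdots\circ\pi^{1}_{1}$ factors through the base $M$, so its pullback annihilates every $d\dot x^{j}$ (indeed $\dot x^{j}\mapsto0$, whence $d\dot x^{j}\mapsto0$); thus the $B_{ij}\,dx^{i}\wedge d\dot x^{j}$ part of \eqref{eq:ALFA--prime} drops out and only the $dx^{i}\wedge dx^{j}$ part remains, with coefficients evaluated at $\dot x=0$, giving \eqref{eq:Omega}. For $\kappa$, contraction of \eqref{eq:ALFA--prime} gives $i_{\partial/\partial\dot x^{l}}\alpha'=-B_{il}\,dx^{i}$, while the sections composing $\rho_{l}$ set $\dot x^{1}=\cdots=\dot x^{l-1}=0$ and $K_{l}$ integrates $\dot x^{l}$ from $0$ to its current value; this produces exactly the integrated coefficients $B_{jl}(x^{p},0,\dots,0,\nu_{(l)},\dot x^{l+1},\dots,\dot x^{m})$ of \eqref{eq:KappaFormula}.

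The main obstacle I anticipate is the bookkeeping of the nested domains $V^{1}_{l}$ and the ordering of the pullbacks $(\pi^{1}_{l})^{*}$ and $(s^{1}_{l,0})^{*}$: at each stage one must check that $K_{l}$ acts on a form genuinely defined on $V^{1}_{l}$ and that the projection pullbacks pass through $d$, so that the telescope closes exactly onto \eqref{eq:KEq}. The one-variable homotopy formula itself is routine, but confirming that the slice terms assemble into the stated $\omega$ with no residual $d\dot x^{l}$ contributions is the delicate point.
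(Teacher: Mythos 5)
Your proposal is correct and arrives at the same telescoping decomposition as the paper, but it proves the crucial one-step recurrence by a genuinely different argument. The paper establishes
\[
\rho_{l}=\left(\pi_{l}^{1}\right)^{*}\left(s_{l,0}^{1}\right)^{*}\rho_{l}+d\left(K_{l}\rho_{l}\right),\qquad\rho_{l}=\left(s_{l-1,0}^{1}\right)^{*}\ldots\left(s_{1,0}^{1}\right)^{*}\alpha',
\]
by computing $d\left(K_{1}\alpha'\right)$ explicitly in coordinates and invoking the Helmholtz conditions \eqref{eq:HelmholtzAB-1} together with the dependent identity \eqref{eq:HelmholtzAB-Dependent} to convert the $x$-derivatives of $B_{ij}$ appearing under the integral sign into $\dot{x}^{1}$-derivatives of $\partial A_{i}/\partial\dot{x}^{j}-\partial A_{j}/\partial\dot{x}^{i}$, which then integrate out; the explicit second line of \eqref{eq:KappaFormula} falls out of that computation as a by-product. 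You instead prove the general fibrewise Poincar\'e identity $dK_{l}\rho+K_{l}d\rho=\rho-\sigma_{l}^{*}\rho$ for an \emph{arbitrary} form $\rho$, and then discard the term $K_{l}d\rho_{l}$ because $\alpha'$ is closed (Lemma \ref{lem:AlfaClosed}) and pullbacks commute with $d$. The two routes consume the same information---the Helmholtz identities used by the paper are precisely what make $\alpha'$ closed---but your version isolates that input in a single place and replaces the coordinate work by a soft, standard homotopy argument that would apply verbatim to any closed form of this shape; the chart expressions for $\omega$ and $\kappa$ are then recovered by the short contraction argument you give at the end, rather than along the way. One step to tighten: in your derivation of the homotopy identity the upper limit of integration is the coordinate $\dot{x}^{l}$ itself, so when $d$ is applied to $K_{l}\rho$ the upper limit contributes a boundary term $d\dot{x}^{l}\wedge\left(\text{integrand at }\nu=\dot{x}^{l}\right)$, and it is exactly this term that restores the $d\dot{x}^{l}$-components of $\rho$ on the right-hand side (naively ``evaluating the pullback at $\nu=\dot{x}^{l}$'' does not return $\rho$, since the section pullback kills $d\dot{x}^{l}$); the cleanest rigorous formulation is the usual $[0,1]$-parametrized homotopy $(s,\cdot)\mapsto\left(x^{i},0,\ldots,0,s\dot{x}^{l},\dot{x}^{l+1},\ldots,\dot{x}^{m}\right)$, from which your operator $K_{l}$ is recovered by the substitution $\nu=s\dot{x}^{l}$.
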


\begin{proof}
First, we prove the identity
\begin{equation}
\alpha'-\left(\pi_{1}^{1}\right)^{*}\left(s_{1,0}^{1}\right)^{*}\alpha'=d\left(K_{1}\alpha'\right).\label{eq:K1aux}
\end{equation}
Using the chart expression of $\alpha'$ \eqref{eq:ALFA--prime},
we get the left-hand side of \eqref{eq:K1aux} as
\begin{align*}
 & \alpha'-\left(\pi_{1}^{1}\right)^{*}\left(s_{1,0}^{1}\right)^{*}\alpha'\\
 & =\frac{1}{4}\left(\frac{\partial A_{i}}{\partial\dot{x}^{j}}-\frac{\partial A_{j}}{\partial\dot{x}^{i}}\right)dx^{i}\wedge dx^{j}+B_{ij}dx^{i}\wedge d\dot{x}^{j}\\
 & -\frac{1}{4}\left(\frac{\partial A_{i}}{\partial\dot{x}^{j}}-\frac{\partial A_{j}}{\partial\dot{x}^{i}}\right)_{\left(x^{p},0,\dot{x}^{2},\ldots,\dot{x}^{m}\right)}dx^{i}\wedge dx^{j}-\sum_{j=2}^{m}B_{ij}\left(x^{p},0,\dot{x}^{2},\ldots,\dot{x}^{m}\right)dx^{i}\wedge d\dot{x}^{j}.
\end{align*}
Using the definition $K_{l}$ \eqref{eq:K-homotopy}, we have $i_{\partial/\partial\dot{x}^{1}}\alpha'=-B_{1i}dx^{i}$
and the right-hand side of \eqref{eq:K1aux} reads
\begin{align*}
 & d\left(K_{1}\alpha'\right)\\
 & =d\left(\intop_{0}^{\dot{x}^{1}}\left(\pi_{1}^{1}\right)^{*}\left(s_{1,\nu}^{1}\right)^{*}\left(i_{\frac{\partial}{\partial\dot{x}^{1}}}\alpha'\right)d\nu\right)=-d\left(\intop_{0}^{\dot{x}^{1}}B_{1i}\left(x^{p},\nu,\dot{x}^{2},\ldots,\dot{x}^{m}\right)d\nu\right)\wedge dx^{i}\\
 & =\frac{1}{2}\left(\intop_{0}^{\dot{x}^{1}}\left(\frac{\partial B_{1i}}{\partial x^{j}}-\frac{\partial B_{1j}}{\partial x^{i}}\right)_{\left(x^{p},\nu,\dot{x}^{2},\ldots,\dot{x}^{m}\right)}d\nu\right)dx^{i}\wedge dx^{j}\\
 & \quad+B_{1i}dx^{i}\wedge d\dot{x}^{1}-\sum_{j=2}^{m}\left(\intop_{0}^{\dot{x}^{1}}\left(\frac{\partial B_{1i}}{\partial\dot{x}^{j}}\right)_{\left(x^{p},\nu,\dot{x}^{2},\ldots,\dot{x}^{m}\right)}d\nu\right)d\dot{x}^{j}\wedge dx^{i}.
\end{align*}
We now apply the Helmholtz conditions \eqref{eq:HelmholtzAB-1} and
\eqref{eq:HelmholtzAB-Dependent},
\[
B_{ij}=B_{ji},\quad\frac{\partial B_{ik}}{\partial\dot{x}^{j}}=\frac{\partial B_{jk}}{\partial\dot{x}^{i}},\quad\frac{\partial B_{ik}}{\partial x^{j}}-\frac{\partial B_{jk}}{\partial x^{i}}=\frac{1}{2}\frac{\partial}{\partial\dot{x}^{k}}\left(\frac{\partial A_{i}}{\partial\dot{x}^{j}}-\frac{\partial A_{j}}{\partial\dot{x}^{i}}\right),
\]
and obtain
\begin{align*}
 & d\left(K_{1}\alpha'\right)\\
 & =\frac{1}{4}\left(\intop_{0}^{\dot{x}^{1}}\frac{\partial}{\partial\dot{x}^{1}}\left(\frac{\partial A_{i}}{\partial\dot{x}^{j}}-\frac{\partial A_{j}}{\partial\dot{x}^{i}}\right)_{\left(x^{p},\nu,\dot{x}^{2},\ldots,\dot{x}^{m}\right)}d\nu\right)dx^{i}\wedge dx^{j}\\
 & \quad+B_{1i}dx^{i}\wedge d\dot{x}^{1}-\sum_{j=2}^{m}\left(\intop_{0}^{\dot{x}^{1}}\left(\frac{\partial B_{ij}}{\partial\dot{x}^{1}}\right)_{\left(x^{p},\nu,\dot{x}^{2},\ldots,\dot{x}^{m}\right)}d\nu\right)d\dot{x}^{j}\wedge dx^{i}\\
 & =\frac{1}{4}\left(\left(\frac{\partial A_{i}}{\partial\dot{x}^{j}}-\frac{\partial A_{j}}{\partial\dot{x}^{i}}\right)-\left(\frac{\partial A_{i}}{\partial\dot{x}^{j}}-\frac{\partial A_{j}}{\partial\dot{x}^{i}}\right)_{\left(x^{p},0,\dot{x}^{2},\ldots,\dot{x}^{m}\right)}\right)dx^{i}\wedge dx^{j}\\
 & \quad+B_{i1}dx^{i}\wedge d\dot{x}^{1}+\sum_{j=2}^{m}\left(B_{ij}-B_{ij}\left(x^{p},0,\dot{x}^{2},\ldots,\dot{x}^{m}\right)\right)dx^{i}\wedge d\dot{x}^{j},\\
 & =\frac{1}{4}\left(\frac{\partial A_{i}}{\partial\dot{x}^{j}}-\frac{\partial A_{j}}{\partial\dot{x}^{i}}\right)dx^{i}\wedge dx^{j}-\frac{1}{4}\left(\frac{\partial A_{i}}{\partial\dot{x}^{j}}-\frac{\partial A_{j}}{\partial\dot{x}^{i}}\right)_{\left(x^{p},0,\dot{x}^{2},\ldots,\dot{x}^{m}\right)}dx^{i}\wedge dx^{j}\\
 & \quad+B_{ij}dx^{i}\wedge d\dot{x}^{j}-\sum_{j=2}^{m}B_{ij}\left(x^{p},0,\dot{x}^{2},\ldots,\dot{x}^{m}\right)dx^{i}\wedge d\dot{x}^{j},
\end{align*}
as required to show \eqref{eq:K1aux}. By means of similar arguments
we observe that the following formula holds
\begin{align}
 & \left(s_{l-1,0}^{1}\right)^{*}\ldots\left(s_{1,0}^{1}\right)^{*}\alpha'\label{eq:Rekurentni}\\
 & \quad=\left(\pi_{l}^{1}\right)^{*}\left(s_{l,0}^{1}\right)^{*}\left(s_{l-1,0}^{1}\right)^{*}\ldots\left(s_{1,0}^{1}\right)^{*}\alpha'+d\left(K_{l}\left(\left(s_{l-1,0}^{1}\right)^{*}\ldots\left(s_{1,0}^{1}\right)^{*}\alpha'\right)\right)\nonumber 
\end{align}
for every $l$, $1\leq l\leq m$. Applying formula \eqref{eq:Rekurentni}
recurrently, we now easily obtain
\begin{align*}
 & \alpha'=\left(\pi_{1}^{1}\right)^{*}\left(\pi_{2}^{1}\right)^{*}\ldots\left(\pi_{m}^{1}\right)^{*}\left(s_{m,0}^{1}\right)^{*}\left(s_{m-1,0}^{1}\right)^{*}\ldots\left(s_{1,0}^{1}\right)^{*}\alpha'\\
 & +d\left(\sum_{l=1}^{m}\left(\pi_{1}^{1}\right)^{*}\left(\pi_{2}^{1}\right)^{*}\ldots\left(\pi_{l-1}^{1}\right)^{*}K_{l}\left(\left(s_{l-1,0}^{1}\right)^{*}\ldots\left(s_{2,0}^{1}\right)^{*}\left(s_{1,0}^{1}\right)^{*}\alpha'\right)\right),
\end{align*}
as required.
\end{proof}
The identity \eqref{eq:KEq} of Theorem \ref{thm:K-local} is formulated
by means charts. We now show that \eqref{eq:KEq} is a~global decomposition
of $\alpha'$.
\begin{thm}
\label{thm:Global}Both $\kappa$ \eqref{eq:KappaFormula} and $\omega$
\eqref{eq:Omega} define (global) differential $1$-forms on $TM$.
\begin{proof}
We prove that the local expressions for both $\omega$ \eqref{eq:Omega}
and $\kappa$ \eqref{eq:KappaFormula} coincide on the intersection
of two overlapping charts on $TM$. To this purpose let $\bar{\Psi}\circ\Psi^{-1}\left(x^{i},\dot{x}^{i}\right)=\left(\bar{x}^{j},\dot{\bar{x}}^{j}\right)$
be the coordinate transformation between charts $\left(V,\psi\right)$,
$\psi=(x^{i},\dot{x}^{i})$, and $\left(\bar{V},\bar{\psi}\right)$,
$\bar{\psi}=(\bar{x}^{i},\dot{\bar{x}}^{i})$, on $TM$, where $\bar{x}^{j}=\bar{x}^{j}\left(x^{i}\right)$
and $\dot{\bar{x}}^{j}=\dot{\bar{x}}^{j}\left(x^{i},\dot{x}^{i}\right)$.

1. From \eqref{eq:Aux3} we have
\[
\left(\frac{\partial A_{i}}{\partial\dot{x}^{j}}-\frac{\partial A_{j}}{\partial\dot{x}^{i}}\right)_{\left(x^{p},0\right)}=\left(\frac{\partial\bar{A}_{k}}{\partial\dot{\bar{x}}^{l}}-\frac{\partial\bar{A}_{l}}{\partial\dot{\bar{x}}^{k}}\right)_{\left(\bar{x}^{p},0\right)}\frac{\partial\bar{x}^{l}}{\partial x^{j}}\frac{\partial\bar{x}^{k}}{\partial x^{i}},
\]
hence the transformation of the local formula \eqref{eq:Omega} for
$\omega$ reads
\begin{align*}
 & \omega=\frac{1}{4}\left(\frac{\partial A_{i}}{\partial\dot{x}^{j}}-\frac{\partial A_{j}}{\partial\dot{x}^{i}}\right)_{\left(x^{p},0\right)}dx^{i}\wedge dx^{j}\\
 & =\frac{1}{4}\left(\frac{\partial\bar{A}_{k}}{\partial\dot{\bar{x}}^{l}}-\frac{\partial\bar{A}_{l}}{\partial\dot{\bar{x}}^{k}}\right)_{\left(\bar{x}^{p},0\right)}\frac{\partial\bar{x}^{l}}{\partial x^{j}}\frac{\partial\bar{x}^{k}}{\partial x^{i}}\frac{\partial x^{i}}{\partial\bar{x}^{u}}\frac{\partial x^{j}}{\partial\bar{x}^{v}}d\bar{x}^{u}\wedge d\bar{x}^{v}\\
 & =\frac{1}{4}\left(\frac{\partial\bar{A}_{k}}{\partial\dot{\bar{x}}^{l}}-\frac{\partial\bar{A}_{l}}{\partial\dot{\bar{x}}^{k}}\right)_{\left(\bar{x}^{p},0\right)}d\bar{x}^{k}\wedge d\bar{x}^{l},
\end{align*}
as required.

2. Consider the local expression for $\kappa$ \eqref{eq:KappaFormula}
to which we apply the change of variables theorem for integrals of
differential forms. Employing the corresponding transformation properties
described for every $l$, $1\leq l\leq m$, by
\begin{align*}
 & \bar{\Psi}\circ\Psi^{-1}\left(x^{i},(0,\ldots0,\nu_{(l)},\dot{x}^{l+1},\ldots,\dot{x}^{m})\right)=(\bar{x}^{j},\bar{\mu}_{(l)}^{j}),
\end{align*}
where
\[
\bar{\mu}_{(l)}^{q}=\frac{\partial\bar{x}^{q}}{\partial x^{l}}\nu_{(l)}+\sum_{k=l+1}^{m}\frac{\partial\bar{x}^{q}}{\partial x^{k}}\dot{x}^{k},\quad1\leq q\leq m,
\]
and for $1\leq s\leq l-1$, $l+1\leq k\leq m$,
\begin{align}
 & 0=\frac{\partial x^{s}}{\partial\bar{x}^{j}}\bar{\mu}_{(l)}^{j},\quad\nu_{(l)}=\frac{\partial x^{l}}{\partial\bar{x}^{j}}\bar{\mu}_{(l)}^{j},\quad\dot{x}^{k}=\frac{\partial x^{k}}{\partial\bar{x}^{j}}\bar{\mu}_{(l)}^{j},\label{eq:TransformationProperties}
\end{align}
we obtain using \eqref{eq:Aux2} the coordinate transformation for
$\kappa$ \eqref{eq:KappaFormula}, where the integrals over segments
are transformed into \emph{line} integrals,
\begin{align}
 & \kappa=-\sum_{l=1}^{m}\intop_{0}^{\dot{x}^{l}}B_{jl}\left(x^{p},0,\ldots,0,\nu_{(l)},\dot{x}^{l+1},\ldots,\dot{x}^{m}\right)d\nu_{(l)}\cdot dx^{j}\nonumber \\
 & =-\sum_{l=1}^{m}\intop_{\bar{\mu}_{(l)}^{p}=\sum_{k=l+1}^{m}\frac{\partial\bar{x}^{p}}{\partial x^{k}}\dot{x}^{k}}^{\bar{\mu}_{(l)}^{p}=\sum_{k=l}^{m}\frac{\partial\bar{x}^{p}}{\partial x^{k}}\dot{x}^{k}}\bar{B}_{uv}(\bar{x}^{i},\bar{\mu}_{(l)}^{i})\frac{\partial\bar{x}^{u}}{\partial x^{j}}\frac{\partial\bar{x}^{v}}{\partial x^{l}}\frac{\partial x^{j}}{\partial\bar{x}^{w}}\frac{\partial x^{l}}{\partial\bar{x}^{q}}d\bar{\mu}_{(l)}^{q}\cdot d\bar{x}^{w}\label{eq:KappaAux}\\
 & =-\sum_{l=1}^{m}\intop_{\bar{\mu}_{(l)}^{p}=\sum_{k=l+1}^{m}\frac{\partial\bar{x}^{p}}{\partial x^{k}}\dot{x}^{k}}^{\bar{\mu}_{(l)}^{p}=\sum_{k=l}^{m}\frac{\partial\bar{x}^{p}}{\partial x^{k}}\dot{x}^{k}}\bar{B}_{uv}(\bar{x}^{i},\bar{\mu}_{(l)}^{i})\frac{\partial\bar{x}^{v}}{\partial x^{l}}\frac{\partial x^{l}}{\partial\bar{x}^{q}}d\bar{\mu}_{(l)}^{q}\cdot d\bar{x}^{u}.\nonumber 
\end{align}
Since the coordinate functions $x^{i}$ and $\dot{x}^{k}$, $l+1\leq k\leq m$,
are constant with respect to the integration in \eqref{eq:KappaAux},
from \eqref{eq:TransformationProperties} we get
\[
\frac{\partial x^{j}}{\partial\bar{x}^{q}}d\bar{\mu}_{(l)}^{q}=0,\quad j\neq l,
\]
hence in \eqref{eq:KappaAux} for every $l$, $1\leq l\leq m$,
\begin{equation}
\frac{\partial\bar{x}^{v}}{\partial x^{l}}\frac{\partial x^{l}}{\partial\bar{x}^{q}}d\bar{\mu}_{(l)}^{q}=\sum_{j=1}^{m}\frac{\partial\bar{x}^{v}}{\partial x^{j}}\frac{\partial x^{j}}{\partial\bar{x}^{q}}d\bar{\mu}_{(l)}^{q}=\delta_{q}^{v}d\bar{\mu}_{(l)}^{q}=d\bar{\mu}_{(l)}^{v}.\label{eq:DifferentialProperties}
\end{equation}
Using \eqref{eq:DifferentialProperties}, formula \eqref{eq:KappaAux}
now reads
\begin{align}
 & \kappa=-\sum_{l=1}^{m}\intop_{\bar{\mu}_{(l)}^{p}=\sum_{k=l+1}^{m}\frac{\partial\bar{x}^{p}}{\partial x^{k}}\dot{x}^{k}}^{\bar{\mu}_{(l)}^{p}=\sum_{k=l}^{m}\frac{\partial\bar{x}^{p}}{\partial x^{k}}\dot{x}^{k}}\bar{B}_{uv}(\bar{x}^{i},\bar{\mu}_{(l)}^{i})d\bar{\mu}_{(l)}^{v}\cdot d\bar{x}^{u}\nonumber \\
 & =-\intop_{\bar{\mu}^{p}=0}^{\bar{\mu}^{p}=\dot{\bar{x}}^{p}}\bar{B}_{uv}(\bar{x}^{i},\bar{\mu}^{i})d\bar{\mu}^{v}\cdot d\bar{x}^{u}.\label{eq:LineIntegral}
\end{align}
Since the functions $\bar{B}_{uv}=\bar{B}_{uv}(\bar{x}^{i},\bar{\mu}^{i})$
satisfy the \emph{Helmholtz condition} \eqref{eq:HelmholtzAB-1},
\begin{align*}
 & \frac{\partial\bar{B}_{uw}}{\partial\mu^{v}}=\frac{\partial\bar{B}_{vw}}{\partial\mu^{u}},
\end{align*}
the line integrals \eqref{eq:LineIntegral} for every $u$, $1\leq u\leq m$,
are \emph{independent} upon choice of a~path connecting the points
$(0,0,\ldots,0)$ and $(\dot{\bar{x}}^{1},\dot{\bar{x}}^{2},\ldots,\dot{\bar{x}}^{m})$.
Thus, in \eqref{eq:LineIntegral} we are allowed to integrate over
segments on an $m$-dimensional cube, that is
\begin{align*}
 & \kappa=-\intop_{\bar{\mu}^{p}=0}^{\bar{\mu}^{p}=\dot{\bar{x}}^{p}}\bar{B}_{uv}(\bar{x}^{i},\bar{\mu}^{i})d\bar{\mu}^{v}\cdot d\bar{x}^{u}\\
 & =-\sum_{v=1}^{m}\intop_{0}^{\dot{\bar{x}}^{v}}\bar{B}_{uv}(\bar{x}^{i},0,\ldots,0,\bar{\mu}^{v},\dot{\bar{x}}^{v+1},\ldots,\dot{\bar{x}}^{m})d\bar{\mu}^{v}\cdot d\bar{x}^{u},
\end{align*}
proving that \eqref{eq:KappaFormula} defines global differential
$1$-form $\kappa$ on $TM$.
\end{proof}
\end{thm}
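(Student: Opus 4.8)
The plan is to prove that the chart formulas \eqref{eq:Omega} and \eqref{eq:KappaFormula} are invariant under the coordinate change on $TM$ induced by a base transformation: since a differential form is globally defined exactly when its local representatives agree on chart overlaps, this invariance is all that needs checking. Throughout I would use that the induced transformation on $TM$ is $\bar{x}^{j}=\bar{x}^{j}(x^{i})$, $\dot{\bar{x}}^{j}=(\partial\bar{x}^{j}/\partial x^{i})\dot{x}^{i}$, together with the tensorial law \eqref{eq:Aux2} for the $B_{ij}$ and the derived law \eqref{eq:Aux3} for the skew-symmetrized $\partial A_{i}/\partial\dot{x}^{j}$.

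For $\omega$ the verification is direct. Evaluating \eqref{eq:Aux3} on the zero section $\dot{x}=0$ annihilates the inhomogeneous term proportional to $\dot{x}^{p}$, leaving the purely tensorial identity
\[
\left(\frac{\partial A_{i}}{\partial\dot{x}^{j}}-\frac{\partial A_{j}}{\partial\dot{x}^{i}}\right)_{(x^{p},0)}=\left(\frac{\partial\bar{A}_{k}}{\partial\dot{\bar{x}}^{l}}-\frac{\partial\bar{A}_{l}}{\partial\dot{\bar{x}}^{k}}\right)_{(\bar{x}^{p},0)}\frac{\partial\bar{x}^{k}}{\partial x^{i}}\frac{\partial\bar{x}^{l}}{\partial x^{j}}.
\]
Substituting this into \eqref{eq:Omega}, writing $dx^{i}\wedge dx^{j}=(\partial x^{i}/\partial\bar{x}^{u})(\partial x^{j}/\partial\bar{x}^{v})\,d\bar{x}^{u}\wedge d\bar{x}^{v}$, and contracting each Jacobian against its inverse reproduces the expression \eqref{eq:Omega} in the barred chart, so $\omega$ is global.

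The substantive part is $\kappa$, whose coefficients are fiber integrals along coordinate segments. First I would recognize the sum in \eqref{eq:KappaFormula} as a single line integral over the fiber: for fixed base point $x$ and fixed index $u$, the segments for $l=m,m-1,\dots,1$ concatenate into the coordinate ``staircase'' from the origin to $(\dot{x}^{1},\dots,\dot{x}^{m})$, and along it $\sum_{v}B_{uv}(x,\dot{x})\,d\dot{x}^{v}$ restricts to exactly the integrand $B_{ul}\,d\nu_{(l)}$ on the $l$-th step; hence the coefficient of $dx^{u}$ is the integral of this fiber $1$-form along the staircase. The Helmholtz symmetry \eqref{eq:HelmholtzAB-1}, $\partial B_{uw}/\partial\dot{x}^{v}=\partial B_{vw}/\partial\dot{x}^{u}$, says precisely that this fiber $1$-form is closed, so the integral is path independent. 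Next I would push each segment integral forward by the change-of-variables theorem: by \eqref{eq:Aux2} and the linear fiber law, the endpoints of the $l$-th segment are carried to $\bar{\mu}_{(l)}^{p}=\sum_{k\geq l}(\partial\bar{x}^{p}/\partial x^{k})\dot{x}^{k}$ and $\sum_{k>l}(\partial\bar{x}^{p}/\partial x^{k})\dot{x}^{k}$, as recorded in \eqref{eq:TransformationProperties}. Since $x^{i}$ and $\dot{x}^{k}$ with $k>l$ are constant along the $l$-th integration, the collapse \eqref{eq:DifferentialProperties} reduces the Jacobian prefactor to $\bar{B}_{uv}\,d\bar{\mu}^{v}$; summing over $l$ then assembles one line integral from $0$ to $\dot{\bar{x}}^{p}$ in the barred fiber coordinates, and path independence lets me deform that path back to the barred staircase, recovering \eqref{eq:KappaFormula} in the chart $(\bar{V},\bar{\psi})$.

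I expect the $\kappa$ computation to be the main obstacle, and within it the conceptual key is the passage from coordinate-segment integrals to path-independent line integrals: this is where the Helmholtz symmetry \eqref{eq:HelmholtzAB-1} is genuinely needed, and where the bookkeeping in \eqref{eq:TransformationProperties}--\eqref{eq:DifferentialProperties} must be handled carefully so that the transformed staircase can be re-expressed as the canonical one. The $\omega$ part, by contrast, is routine once \eqref{eq:Aux3} is evaluated on the zero section.
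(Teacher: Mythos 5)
Your proposal is correct and takes essentially the same route as the paper's own proof: for $\omega$ you evaluate \eqref{eq:Aux3} on the zero section to kill the inhomogeneous term, and for $\kappa$ you use \eqref{eq:Aux2} with the bookkeeping \eqref{eq:TransformationProperties}--\eqref{eq:DifferentialProperties} to transform the segment integrals into a single fiber line integral from $0$ to $\dot{\bar{x}}^{p}$, then invoke the Helmholtz symmetry \eqref{eq:HelmholtzAB-1} for path independence to deform back onto the barred coordinate staircase. This matches the paper's argument step for step, including the identification of the closedness of the fiber $1$-form as the key point.
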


Theorem \ref{thm:Global} implies that $\alpha'=\omega+d\kappa$ is
globally defined. Using this fact and applying Lemma \ref{lem:AlfaClosed}
and \ref{lem:Mu0}, we get a~global decomposition of $\alpha_{\varepsilon}$,
\begin{equation}
\alpha_{\varepsilon}=\omega+d\left(\mu_{0}+\kappa\right).\label{eq:Alfa-Global-Decomp}
\end{equation}
Clearly, the problem of global exactness of the Lepage equivalent
$\alpha_{\varepsilon}$ is by means of \eqref{eq:Alfa-Global-Decomp}
reduced to global exactness of $2$-form $\omega$ \eqref{eq:Omega}
defined on $M$. In other words, if $\omega$ is globally exact, then
so is $\alpha_{\varepsilon}$ hence the source form $\varepsilon$
is globally variational.
\begin{rem}
In general, if $M$ is an $m$-dimensional smooth manifold and $\rho$
is a \emph{closed} differential $k$-form on $M$, $k\leq m$, then
the equation $\rho=d\mu$ need \emph{not} have a~(global) solution
$\mu$ on $M$. Indeed, it is the $k$-th de Rham cohomology group
$H_{\mathrm{dR}}^{k}M=\mathrm{Ker}\,d_{k}/\mathrm{Im}\,d_{k-1}$ which
decides about solvability of the exactness equation $\rho=d\mu$.
If $H_{\mathrm{dR}}^{k}M$ is \emph{trivial}, then $\rho=d\mu$ has
always a solution $\mu$ on $M$. Nevertheless, in this case ($H_{\mathrm{dR}}^{k}M=0$)
there is \emph{no general} constructive procedure of finding a~solution
$\eta$ for a~given closed $k$-form $\rho$, where $k<m$; if $k=m$,
to find a solution one can apply the \emph{top-cohomology} theorems
(cf. Lee \cite{Lee}).
\end{rem}

\begin{cor}
\label{cor:Simple}If the $2$-form $\omega$ \eqref{eq:Omega} vanishes,
i.e. the coefficients of $\omega$ satisfy
\begin{equation}
\left(\frac{\partial A_{i}}{\partial\dot{x}^{j}}-\frac{\partial A_{j}}{\partial\dot{x}^{i}}\right)_{\left(x^{p},0\right)}=0\label{eq:Simple}
\end{equation}
in every chart, then source form $\varepsilon$ is globally variational
and it admits a~Lagrangian on $\mathbb{R}\times TM$, namely
\[
\lambda=h\left(\mu_{0}+\kappa\right),
\]
where $\mu_{0}$ and $\kappa$ are given by \eqref{eq:Mu0} and \eqref{eq:KappaFormula},
respectively.
\end{cor}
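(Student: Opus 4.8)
The plan is to obtain the result as an immediate specialization of the global decomposition \eqref{eq:Alfa-Global-Decomp},
\[
\alpha_{\varepsilon}=\omega+d\left(\mu_{0}+\kappa\right),
\]
which was assembled from Lemma \ref{lem:AlfaClosed}, Lemma \ref{lem:Mu0}, and Theorems \ref{thm:K-local} and \ref{thm:Global}. The whole point is that the hypothesis \eqref{eq:Simple} annihilates the only term in \eqref{eq:Alfa-Global-Decomp} that obstructs exactness, so that $\alpha_{\varepsilon}$ becomes globally exact with an explicitly exhibited primitive.

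First I would invoke the hypothesis. By Theorem \ref{thm:Global} the form $\omega$ is a globally defined $2$-form on $M$ whose chart coefficients are precisely $\frac{1}{4}(\partial A_{i}/\partial\dot{x}^{j}-\partial A_{j}/\partial\dot{x}^{i})_{(x^{p},0)}$; hence the vanishing \eqref{eq:Simple} in every chart forces $\omega=0$ identically. Substituting into \eqref{eq:Alfa-Global-Decomp} yields
\[
\alpha_{\varepsilon}=d\left(\mu_{0}+\kappa\right).
\]
The $1$-form $\mu:=\mu_{0}+\kappa$ is globally defined on $\mathbb{R}\times TM$, since $\mu_{0}$ is global by Lemma \ref{lem:Mu0} and $\kappa$ is global by Theorem \ref{thm:Global}; thus it is a genuine global solution of the exactness equation \eqref{eq:AlfaExactnessEq}.

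It then follows, exactly as recorded in the discussion preceding the statement, that global exactness of the Lepage equivalent $\alpha_{\varepsilon}$ forces $\varepsilon$ to be globally variational. With this in hand I would apply Lemma \ref{lem:Lemma} to the particular solution $\mu=\mu_{0}+\kappa$: its horizontal component $h\mu=h(\mu_{0}+\kappa)$ is a Lagrangian on $\mathbb{R}\times T^{2}M$ for $\varepsilon$, and since it is defined over all of $\mathbb{R}\times TM$ rather than on a single chart domain, it furnishes the asserted global Lagrangian $\lambda=h(\mu_{0}+\kappa)$.

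There is no serious obstacle here; the corollary is a direct reading-off of the preceding development, all of whose analytic content already lives in the proofs of Theorems \ref{thm:K-local} and \ref{thm:Global}. The one point demanding attention is the logical order, since Lemma \ref{lem:Lemma} presupposes that $\varepsilon$ is already globally variational. I must therefore first deduce global variationality from the global exactness $\alpha_{\varepsilon}=d\mu$ --- concretely, the identity $\varepsilon=p_{1}\alpha_{\varepsilon}$ of Theorem \ref{thm:LocalVariational}(f) combined with $\alpha_{\varepsilon}=d\mu$ gives $p_{1}d\mu=\varepsilon$, so that $\mu$ is a Lepage form and $h\mu$ is a global Lagrange function whose Euler--Lagrange form is $\varepsilon$ --- and only afterwards invoke the lemma to identify the explicit Lagrangian.
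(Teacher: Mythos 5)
Your proof is correct and follows essentially the same route as the paper's own: specialize the global decomposition \eqref{eq:Alfa-Global-Decomp} under the hypothesis \eqref{eq:Simple} to get $\alpha_{\varepsilon}=d\left(\mu_{0}+\kappa\right)$ with $\mu_{0}+\kappa$ globally defined (Lemma \ref{lem:Mu0}, Theorem \ref{thm:Global}), and read off the Lagrangian $\lambda=h\left(\mu_{0}+\kappa\right)$ via Lemma \ref{lem:Lemma}. Your closing remark on logical order is in fact a small improvement on the published proof: the paper invokes Lemma \ref{lem:Lemma}, whose hypothesis is that $\varepsilon$ is \emph{already} globally variational, so your direct verification---$p_{1}d\left(\mu_{0}+\kappa\right)=p_{1}\alpha_{\varepsilon}=\varepsilon$ by Theorem \ref{thm:LocalVariational}(f), whence $\mu_{0}+\kappa$ is a Lepage $1$-form and $h\left(\mu_{0}+\kappa\right)$ is a global Lagrangian for $\varepsilon$---removes a circularity that the paper leaves implicit.
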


\begin{proof}
This is an immediate consequence of Theorems \ref{thm:K-local} and
\ref{thm:Global}, and Lemmas \ref{lem:AlfaClosed} and \ref{lem:Mu0}.
Indeed, using \eqref{eq:Alfa-Global-Decomp} we get
\[
\alpha_{\varepsilon}=\alpha_{0}\wedge dt+\alpha'=d\left(\mu_{0}+\kappa\right),
\]
hence the horizontal part of $\mu_{0}+\kappa$ is a Lagrangian for
$\varepsilon$; cf. Lemma \ref{lem:Lemma}.
\end{proof}
\begin{rem}[$\dim M=2$]
 In our paper \cite{UrbanVolna-2manifolds}, we studied the exactness
equation for Lepage equivalents of source forms on $\mathbb{R}\times T^{2}M$,
where $M$ is $2$-dimensional connected smooth manifold. In the corresponding
decomposition \eqref{eq:Alfa-Global-Decomp}, the equation $d\eta=\omega$
is solvable and its solution $\eta$ can be constructed with the help
of the top-cohomology theorems. Examples of a~global Lagrangian construction
on concrete smooth $2$-manifolds (M\textcolor{black}{\"o}bius strip,
punctured torus) are also discussed.
\end{rem}

\section{Globally variational homogeneous equations of degree $c\protect\neq0,1$}

We briefly recall some basic facts on locally variational second-order
ordinary differential equations, given by homogeneous functions of
degree $c\neq0,1$. More detailed exposition with proofs can be found
in a~recent paper by Rossi \cite{Rossi Debrecen}. Our main result
consists in Theorem \ref{thm:Main}, showing that locally variational
source forms with homogeneous coefficients of degree $c\neq0,1$ are
automatically globally variational.

A~real-vauled function $F:T^{2}M\rightarrow\mathbb{R}$, resp. $F:TM\rightarrow\mathbb{R}$,
is called \emph{homogeneous of degree} $c$, if $F$ satisfies 
\begin{equation}
\frac{\partial F}{\partial\dot{x}^{i}}\dot{x}^{i}+2\frac{\partial F}{\partial\ddot{x}^{i}}\ddot{x}^{i}=cF,\quad\textrm{resp.}\,\,\frac{\partial F}{\partial\dot{x}^{i}}\dot{x}^{i}=cF,\label{eq:HomogeneityCondition}
\end{equation}
with respect to any chart on $M$. Note that for $c=1$, $F:TM\rightarrow\mathbb{R}$
satisfying the Euler condition \eqref{eq:HomogeneityCondition} is
called a~positive-homogenous function.

Let $\varepsilon$ be a~locally variational source form $\mathbb{R}\times T^{2}M$.
If the coefficients $\varepsilon_{i}=A_{i}+B_{ij}\ddot{x}^{j}$ of
$\varepsilon$ are homogeneous of degree $c\neq0,1$, then using \eqref{eq:HomogeneityCondition}
it is readily seen that $A_{i}$ are homogeneous of degree $c$, and
$B_{ij}$ are homogeneous of degree $c-2$, and vice versa. It is
also straightforward to show that locally variational $\varepsilon_{i}$
are homogeneous of degree $c$ if and only if $\varepsilon_{i}$ possess
a~homogeneous Lagrangian of degree $c$.

The following theorem characterizes the structure of locally variational
homogeneous source forms.
\begin{thm}
\label{thm:Homogeneous}Let $\varepsilon$ be a~source form on $\mathbb{R}\times T^{2}M$,
with coefficients homogeneous of degree $c\neq0,1$, and affine in
second derivatives, $\varepsilon_{i}=A_{i}+B_{ij}\ddot{x}^{j}$ \eqref{eq:Epsilon-AB}.
The following two conditions are equivalent:

\emph{(a)} $\varepsilon$ is locally variational,

\emph{(b)} functions\emph{ $A_{i}$}, $B_{ij}$, satisfy the subset
of Helmholtz conditions \eqref{eq:HelmholtzAB-1}-\eqref{eq:HelmholtzAB-2},
\begin{align*}
 & B_{ij}=B_{ji},\quad\frac{\partial B_{ik}}{\partial\dot{x}^{j}}=\frac{\partial B_{jk}}{\partial\dot{x}^{i}},\\
 & \frac{\partial A_{i}}{\partial\dot{x}^{j}}+\frac{\partial A_{j}}{\partial\dot{x}^{i}}-2\frac{\partial B_{ij}}{\partial x^{k}}\dot{x}^{k}=0.
\end{align*}
Moreover, if $\varepsilon$ is locally variational, then $A_{i}$
satisfy 
\begin{equation}
A_{i}=\frac{1}{c-1}\left(\frac{1}{2}\left(\frac{\partial B_{ij}}{\partial x^{k}}+\frac{\partial B_{ik}}{\partial x^{j}}\right)-\frac{1}{c}\frac{\partial B_{jk}}{\partial x^{i}}\right)\dot{x}^{j}\dot{x}^{k}.\label{eq:A-homogeneous}
\end{equation}
\end{thm}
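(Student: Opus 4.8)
The plan is to prove the equivalence and the explicit reconstruction \eqref{eq:A-homogeneous} together. The implication (a)$\Rightarrow$(b) is immediate: by Theorem~\ref{thm:LocalVariational}, local variationality is equivalent to the full Helmholtz system \eqref{eq:HelmholtzAB-1}--\eqref{eq:HelmholtzAB-3} in the $A,B$ form, and the conditions in (b) are a subset of it. All the work is in (b)$\Rightarrow$(a): granting only \eqref{eq:HelmholtzAB-1}, \eqref{eq:HelmholtzAB-2} and homogeneity of degree $c\neq0,1$, I must recover the remaining condition \eqref{eq:HelmholtzAB-3}. The idea is first to reconstruct $A_i$ explicitly from $B_{ij}$, which simultaneously proves the \emph{moreover} part, and then to read off \eqref{eq:HelmholtzAB-3} from that formula.

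First I would check that the auxiliary identity \eqref{eq:HelmholtzAB-Dependent} already follows from \eqref{eq:HelmholtzAB-1} and \eqref{eq:HelmholtzAB-2} alone, with no appeal to homogeneity; this removes any circularity in using it afterwards. Abbreviating $T_{ij}=\partial A_i/\partial\dot{x}^j-\partial A_j/\partial\dot{x}^i$, I would differentiate \eqref{eq:HelmholtzAB-2} by $\dot{x}^k$ and extract the part antisymmetric in $(i,j)$ by the standard permutation argument, exploiting that $\partial^2 A_i/\partial\dot{x}^j\partial\dot{x}^k$ is symmetric in $(j,k)$. The leftover second-order term is proportional to $\partial^2 B_{ik}/\partial\dot{x}^j\partial x^l-\partial^2 B_{jk}/\partial\dot{x}^i\partial x^l$, which vanishes upon differentiating \eqref{eq:HelmholtzAB-1} by $x^l$; what remains is $\partial T_{ij}/\partial\dot{x}^k=2\left(\partial B_{ik}/\partial x^j-\partial B_{jk}/\partial x^i\right)$, i.e.\ \eqref{eq:HelmholtzAB-Dependent}.

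Homogeneity enters through the Euler relation \eqref{eq:HomogeneityCondition}: $A_i$ is homogeneous of degree $c$ and hence $T_{ij}$ of degree $c-1$. Contracting \eqref{eq:HelmholtzAB-2} with $\dot{x}^j$ and using $\partial A_i/\partial\dot{x}^j\cdot\dot{x}^j=cA_i$ gives $cA_i=(\partial B_{ij}/\partial x^k)\dot{x}^j\dot{x}^k+\tfrac12 T_{ij}\dot{x}^j$; contracting \eqref{eq:HelmholtzAB-Dependent} with $\dot{x}^k$ and applying Euler to $T_{ij}$ gives $T_{ij}=\tfrac{2}{c-1}\left(\partial B_{ik}/\partial x^j-\partial B_{jk}/\partial x^i\right)\dot{x}^k$, which is where $c\neq1$ is used. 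Substituting the latter into the former and dividing by $c$ (here $c\neq0$), then symmetrizing the coefficient of $\dot{x}^j\dot{x}^k$ in $j$ and $k$, I obtain precisely \eqref{eq:A-homogeneous}. This establishes the \emph{moreover} statement.

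It remains to verify \eqref{eq:HelmholtzAB-3} from the explicit $A_i$. Differentiating \eqref{eq:A-homogeneous} by $x^j$ (the factors $\dot{x}^p\dot{x}^q$ are inert under $\partial/\partial x^j$) and antisymmetrizing in $(i,j)$, the term carrying $\partial^2 B_{pq}/\partial x^i\partial x^j$ cancels and I am left with $\tfrac{1}{c-1}\left(\partial^2 B_{ip}/\partial x^j\partial x^q-\partial^2 B_{jp}/\partial x^i\partial x^q\right)\dot{x}^p\dot{x}^q$; computing $\tfrac12(\partial T_{ij}/\partial x^l)\dot{x}^l$ from the formula for $T_{ij}$ yields the same expression, so \eqref{eq:HelmholtzAB-3} holds and Theorem~\ref{thm:LocalVariational} delivers local variationality. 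I expect the main obstacle to be purely organizational---keeping the index symmetrizations straight in the permutation step and in this last check; the conceptual mechanism is that for $c\neq0,1$ the Euler operator is invertible on homogeneous functions, so \eqref{eq:HelmholtzAB-1}--\eqref{eq:HelmholtzAB-2} already determine $A_i$ and force \eqref{eq:HelmholtzAB-3}, which is exactly why the truncated Helmholtz system in (b) suffices.
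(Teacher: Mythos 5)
Your proposal is correct, but it necessarily takes a different route from the paper, because the paper does not prove Theorem~\ref{thm:Homogeneous} at all: its proof is the single line ``See Rossi \cite{Rossi Debrecen}.'' You supply the self-contained argument that the paper outsources, and I have checked its three computational pillars. (i) Your preliminary claim, that \eqref{eq:HelmholtzAB-Dependent} follows from \eqref{eq:HelmholtzAB-1}--\eqref{eq:HelmholtzAB-2} alone, is right and is the crucial sharpening of the paper's Remark (which only asserts dependence on the full system \eqref{eq:HelmholtzAB-1}--\eqref{eq:HelmholtzAB-3}): writing $T_{ij}=\partial A_i/\partial\dot{x}^j-\partial A_j/\partial\dot{x}^i$ and $P_{ijk}=\partial^2A_i/\partial\dot{x}^j\partial\dot{x}^k$, the $\dot{x}^k$-derivative of \eqref{eq:HelmholtzAB-2} taken in the three cyclic index arrangements, combined via $P_{ijk}=P_{ikj}$, isolates $\partial T_{ij}/\partial\dot{x}^k=2\left(\partial B_{ik}/\partial x^j-\partial B_{jk}/\partial x^i\right)$ after the terms $\left(\partial^2B_{ik}/\partial x^l\partial\dot{x}^j-\partial^2B_{jk}/\partial x^l\partial\dot{x}^i\right)\dot{x}^l$ are annihilated by the $x^l$-derivative of \eqref{eq:HelmholtzAB-1}; without this step the use of \eqref{eq:HelmholtzAB-Dependent} in (b)$\Rightarrow$(a) would indeed be circular, and you correctly flagged and closed that hole. (ii) The Euler-relation bookkeeping checks out: the splitting of homogeneity of $\varepsilon_i$ into $A_i$ of degree $c$ and $B_{ij}$ of degree $c-2$ is stated in the paper just before the theorem, contracting \eqref{eq:HelmholtzAB-2} with $\dot{x}^j$ gives $2cA_i=2(\partial B_{ij}/\partial x^k)\dot{x}^j\dot{x}^k+T_{ij}\dot{x}^j$, contracting \eqref{eq:HelmholtzAB-Dependent} with $\dot{x}^k$ gives $(c-1)T_{ij}=2\left(\partial B_{ik}/\partial x^j-\partial B_{jk}/\partial x^i\right)\dot{x}^k$, and substituting, dividing by $c$, and symmetrizing in $(j,k)$ reproduces \eqref{eq:A-homogeneous} exactly, with $c\neq1$ and $c\neq0$ entering precisely where you say. (iii) The closing verification of \eqref{eq:HelmholtzAB-3} also works: since only the $(j,k)$-symmetric part of the coefficient survives contraction with $\dot{x}^j\dot{x}^k$, one may rewrite $A_i=\tfrac{1}{c-1}\left(\partial B_{ip}/\partial x^q-\tfrac{1}{c}\,\partial B_{pq}/\partial x^i\right)\dot{x}^p\dot{x}^q$, the $\tfrac1c$-terms cancel under antisymmetrization in $(i,j)$ by equality of mixed partials, and what remains, $\tfrac{1}{c-1}\left(\partial^2B_{ip}/\partial x^j\partial x^q-\partial^2B_{jp}/\partial x^i\partial x^q\right)\dot{x}^p\dot{x}^q$, coincides with $\tfrac12(\partial T_{ij}/\partial x^l)\dot{x}^l$ computed from the identity in (ii) --- legitimate, since that identity holds as an equality of functions under (b) plus homogeneity. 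What your approach buys, compared with the paper's citation, is a proof internal to the paper's own framework (Theorem~\ref{thm:LocalVariational}(c) and the remarked identity \eqref{eq:HelmholtzAB-Dependent}), together with the conceptual explanation --- invertibility of the Euler operator on homogeneous functions of degree $c\neq0,1$ --- of why the truncated Helmholtz system in (b) suffices; the one presentational improvement I would suggest is to state explicitly, before using it, that the Euler form of homogeneity $\left(\partial A_i/\partial\dot{x}^j\right)\dot{x}^j=cA_i$ propagates to $\left(\partial T_{ij}/\partial\dot{x}^k\right)\dot{x}^k=(c-1)T_{ij}$ by differentiating the Euler relation once in $\dot{x}$.
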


\begin{proof}
See Rossi \cite{Rossi Debrecen}.
\end{proof}
Combining Theorem \ref{thm:Homogeneous} with the results of Section
3, summarized by Corollary \ref{cor:Simple}, we immediately obtain
the following consequence for variational and homogeneous of degree
$c\neq0,1$ equations.
\begin{thm}
\label{thm:Main}Let $\varepsilon$ be a~locally variational source
form on $\mathbb{R}\times T^{2}M$, with coefficients homogeneous
of degree $c\neq0,1$. Then $\varepsilon$ is also globally variational,
and it admits a~global Lagrangian given by $\lambda=h\left(\mu_{0}+\kappa\right)$,
where $\mu_{0}$ and $\kappa$ are $1$-forms on $\mathbb{R}\times TM$
given by \eqref{eq:Mu0} and \eqref{eq:KappaFormula}, respectively.
\end{thm}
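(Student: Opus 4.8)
The plan is to reduce the statement to Corollary \ref{cor:Simple}. By that corollary it is enough to show that, in every chart, the $2$-form $\omega$ of \eqref{eq:Omega} vanishes, i.e. that the coefficients $A_i$ satisfy \eqref{eq:Simple},
\[
\left(\frac{\partial A_{i}}{\partial\dot{x}^{j}}-\frac{\partial A_{j}}{\partial\dot{x}^{i}}\right)_{\left(x^{p},0\right)}=0;
\]
granting this, Corollary \ref{cor:Simple} yields at once that $\varepsilon$ is globally variational with global Lagrangian $\lambda=h\left(\mu_{0}+\kappa\right)$, $\mu_{0}$ and $\kappa$ being given by \eqref{eq:Mu0} and \eqref{eq:KappaFormula}.

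First I would record the homogeneity of the individual coefficients. As noted before Theorem \ref{thm:Homogeneous}, homogeneity of degree $c$ of $\varepsilon_{i}=A_{i}+B_{ij}\ddot{x}^{j}$ forces $A_{i}$ to be homogeneous of degree $c$ in the velocities, $\frac{\partial A_{i}}{\partial\dot{x}^{k}}\dot{x}^{k}=cA_{i}$ on $V^{1}$. The key elementary remark is that differentiation with respect to a velocity variable lowers the degree of homogeneity by one: differentiating the scaling identity $A_{i}(x,s\dot{x})=s^{c}A_{i}(x,\dot{x})$, $s>0$, with respect to $\dot{x}^{j}$ gives
\[
\frac{\partial A_{i}}{\partial\dot{x}^{j}}(x,s\dot{x})=s^{c-1}\frac{\partial A_{i}}{\partial\dot{x}^{j}}(x,\dot{x}),
\]
so that each function $\partial A_{i}/\partial\dot{x}^{j}$ is homogeneous of degree $c-1$.

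Next I would evaluate this on the zero section. For fixed $x$, write $g(\dot{x})=\partial A_{i}/\partial\dot{x}^{j}(x,\dot{x})$; the scaling relation gives $g(0)=s^{c-1}g(0)$ for every $s>0$, and since $c\neq1$ we may pick $s\neq1$ and conclude $g(0)=0$. Hence not merely the antisymmetric combination but each derivative $\partial A_{i}/\partial\dot{x}^{j}$ vanishes along $\dot{x}=0$, which is exactly \eqref{eq:Simple}. Applying Corollary \ref{cor:Simple} then completes the argument. The hypothesis $c\neq0,1$ enters here twice: $c\neq1$ is precisely what makes the scaling relation force $g(0)=0$, while $c\neq0$ is needed for Theorem \ref{thm:Homogeneous} (and the representation \eqref{eq:A-homogeneous}) to apply in the first place.

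The delicate point --- and the only real obstacle --- is the legitimacy of evaluating the homogeneous function $\partial A_{i}/\partial\dot{x}^{j}$ at $\dot{x}=0$, i.e. whether $g$ is continuous up to the zero section so that $g(0)$ is a well-defined finite value to which the scaling argument applies. For $c>1$ this is automatic, since a continuous function homogeneous of positive degree $c-1$ extends by $0$ across $\dot{x}=0$; in particular this covers the geometrically important case $c=2$ of Riemann and Finsler energies. To settle the regularity uniformly I would invoke the explicit representation \eqref{eq:A-homogeneous} from Theorem \ref{thm:Homogeneous}, which exhibits $A_{i}$ as $\dot{x}^{j}\dot{x}^{k}$ multiplied by coefficients homogeneous of degree $c-2$; this both confirms the required differentiability and permits, if desired, a direct verification that $\partial A_{i}/\partial\dot{x}^{j}$ vanishes on the zero section. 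I expect all remaining steps to be routine, the geometric content being entirely subsumed in Corollary \ref{cor:Simple}.
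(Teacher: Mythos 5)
Your proposal is correct, and its skeleton coincides with the paper's: both arguments reduce Theorem \ref{thm:Main} to Corollary \ref{cor:Simple} by verifying condition \eqref{eq:Simple} in every chart. The difference is in how \eqref{eq:Simple} is obtained. The paper deduces it from the structural formula \eqref{eq:A-homogeneous} of Theorem \ref{thm:Homogeneous} (Rossi's theorem), which exhibits $A_{i}$ as a quadratic expression $C_{ijk}\dot{x}^{j}\dot{x}^{k}$ whose $\dot{x}$-derivative then vanishes along the zero section. You instead argue directly from homogeneity: differentiating the Euler identity $\frac{\partial A_{i}}{\partial\dot{x}^{k}}\dot{x}^{k}=cA_{i}$ once with respect to $\dot{x}^{j}$ gives $\frac{\partial^{2}A_{i}}{\partial\dot{x}^{j}\partial\dot{x}^{k}}\dot{x}^{k}=(c-1)\frac{\partial A_{i}}{\partial\dot{x}^{j}}$, which at $\dot{x}=0$ forces $\frac{\partial A_{i}}{\partial\dot{x}^{j}}(x,0)=0$ since $c\neq1$ --- a conclusion stronger than the antisymmetric condition \eqref{eq:Simple}, and obtained more elementarily, since Theorem \ref{thm:Homogeneous} is bypassed altogether. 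Two remarks on your own commentary. First, on your route the hypothesis $c\neq0$ is in fact never used: your main line does not invoke Theorem \ref{thm:Homogeneous} (local variationality, which is what Corollary \ref{cor:Simple} and the decomposition \eqref{eq:Alfa-Global-Decomp} require, is a hypothesis of the theorem), so your closing claim that $c\neq0$ is needed ``for Theorem \ref{thm:Homogeneous} to apply'' is moot; your argument actually establishes the conclusion for every $c\neq1$ within the paper's framework. Second, the regularity point you flag is real but is resolved differently than you suggest: appealing to \eqref{eq:A-homogeneous} does not by itself settle it, because the coefficients there are homogeneous of degree $c-2$ and may be singular at the zero section when $c<2$ (as for genuine Finsler metrics, defined only on the slit tangent bundle), so multiplying them by $\dot{x}^{j}\dot{x}^{k}$ does not obviously restore differentiability of $\partial A_{i}/\partial\dot{x}^{j}$ at $\dot{x}=0$. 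What legitimizes the evaluation --- both in your scaling argument and in the paper's own formulas \eqref{eq:Omega} and \eqref{eq:Simple} --- is the paper's standing assumption, stated after \eqref{eq:Source}, that the coefficients $\varepsilon_{i}$, hence $A_{i}$ and $B_{ij}$, are differentiable on the whole chart preimage including the zero section. With that convention your Euler-identity argument is airtight, and it isolates exactly where $c\neq1$ enters.
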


\begin{proof}
From the assumptions on $\varepsilon_{i}=A_{i}+B_{ij}\ddot{x}^{j}$
it follows that $A_{i}$ has the expression given by Theorem \ref{thm:Homogeneous},
\eqref{eq:A-homogeneous}. Hence condition \eqref{eq:Simple} holds,
and by Corollary \ref{cor:Simple} $\varepsilon$ is globally variational,
possessing a~global Lagrangian $\lambda=h\left(\mu_{0}+\kappa\right)=\mathscr{L}dt$,
where
\[
\mathscr{L}=-\varepsilon_{i}\dot{x}^{i}t-\sum_{l=1}^{m}\intop_{0}^{\dot{x}^{l}}B_{jl}\left(x^{p},0,\ldots,0,\nu_{(l)},\dot{x}^{l+1},\ldots,\dot{x}^{m}\right)d\nu_{(l)}\dot{x}^{j}.
\]
\end{proof}
\begin{rem}
Standard examples of variational and homogeneous of degree $2$ equations
are, for instance, the geodesic equations in Riemann geometry, geodesic
equations of a~spray in Finsler geometry, as well as the geodesic
equations of a~Cartan connection (or metrizable connection, cf. Krupka and Sattarov \cite{KrupkaSattarov}) on a~tangent bundle, associated with a~Finsler structure. Here, the functions $-B_{ij}$ are equal to components of metrics (Riemannian or Finsler), i.e. $B_{ij}=- g_{ij}$. Although it
is straightforward from the nature of these equations arising from the (global)
energy Lagrangian,
we point out that global variationality of these systems follows from
Theorem \ref{thm:Main}.
\end{rem}

\end{document}